\documentclass{article}
\usepackage[margin=1.7in]{geometry}

\usepackage{fancyhdr}
\usepackage{tikz,subfigure}
\usepackage{wrapfig}
\usepackage{amssymb}
\usepackage{amsmath}
\usepackage{amstext}
\usepackage{amsthm}
\usepackage{mathtools}
\usepackage{latexsym}
\usepackage{verbatim}
\usepackage{bbding}
\usepackage{linguex}
\usepackage{synttree}
\usepackage[round]{natbib}
\usepackage{multicol}
\usepackage{proof}
\usepackage[colorlinks=true,citecolor=blue,linkcolor=blue,urlcolor=blue]{hyperref}
\usetikzlibrary{decorations.pathreplacing,calligraphy}
\usepackage{authblk}
\usepackage{orcidlink} 

\tikzstyle{index on}=[inner sep=2pt, white, circle, fill=black]
\tikzstyle{index off}=[inner sep=2pt, black, circle, draw]
\tikzstyle{index gray}=[inner sep=2pt, black, circle, fill=lightgray]
\tikzstyle{opaque}=[fill=gray,fill opacity=.1]

\renewcommand{\phi}{\varphi}

\renewcommand{\gg}{\gamma}
\newcommand{\ee}{\varepsilon}

\renewcommand{\aa}{\alpha}
\newcommand{\bb}{\beta}

\newcommand{\existsi}{\mathord{\exists\hspace{-.4em}\exists}}

\def\disji{\rotatebox[origin=c]{-90}{$\!{\geqslant}$}}

\newcommand\M{\ensuremath{\mathcal{M}}}

\renewcommand{\o}{\overline}

\newcommand{\inqbq}{\ensuremath{\textsf{InqBQ}}}
\newcommand{\inqb}{\ensuremath{\textsf{InqB}}}
\newcommand{\lori}{\,\disji\,}

\newcommand{\id}{\text{id}}

\theoremstyle{definition}
\newtheorem{theorem}{Theorem}[section]
\newtheorem{definition}[theorem]{Definition}
\newtheorem{proposition}[theorem]{Proposition}

\newtheorem{corollary}[theorem]{Corollary}
\newtheorem{example}[theorem]{Example}

\newtheorem{lemma}[theorem]{Lemma}

\newcommand{\dep}[2]{=\hspace{-0.1cm}(#1,#2)}

\newcommand{\inqBT}{\ensuremath{\mathsf{InqBT}}}
\newcommand{\inqbt}{\ensuremath{\mathsf{InqBT}}}

\title{Inquisitive first-order logic is neither compact nor recursively axiomatizable\thanks{The first author gratefully acknowledges funding from the European Research Council under the EU
Horizon Europe research and innovation programme (Grant Agreement No. 101116774). The second author was partially supported by a Research Council of Finland grant No. 375116. 

No use of AI was made in this work, either in obtaining the results or in writing the paper.}}

\author{Ivano Ciardelli\footnote{University of Padua. Email: ivano.ciardelli@unipd.it. ORCID: 0000-0001-6152-3401.} 
{ and }Juha Kontinen\footnote{University of Helsinki. Email: juha.kontinen@helsinki.fi. ORCID: 0000-0003-0115-5154.}}

\date{\today}

\begin{document}

\maketitle

\begin{abstract}
Inquisitive first-order logic is an extension of classical first-order logic with formulas regimenting first-order questions, such as ``whether all objects are $P$'', ``which objects are $P$'', and ``what is one object that is $P$''. Since it was first developed in 2009, two major meta-theoretical questions about this logic have remained open, in spite of significant efforts. The first concerns compactness: if a conclusion follows from a set of premises, does it always follow from some finite subset? The second concerns the computational status of validity: is the set of validities recursively enumerable, or equivalently, does the logic admit a recursive axiomatization? 

We settle both questions in the negative, showing that inquisitive first-order logic is neither compact nor recursively axiomatizable. Furthermore, we prove that it violates another signature property of first-order logic, namely, Craig interpolation.
We discuss the significance of our results, and show how to extend them to a closely related logic, viz., inquisitive team logic.
\end{abstract}

\section{Introduction}

Traditionally, logic is concerned with relations between \emph{statements}---declarative sentences such as ``Alice is a student'' or ``all students passed the test''. Inquisitive logic \citep{Ciardelli:18book,Ciardelli:23book} is an approach that aims to extend the scope of logic to cover not only statements, but also \emph{questions}---interrogative sentences such as ``whether Alice is a student'' and ``which students passed the test''. 
This extension is interesting for several reasons: it makes it possible to regard interesting logical relations such as \emph{answerhood} and \emph{dependency} as facets of the key logical notion of \emph{entailment}, and to deploy the tools traditionally developed for entailment to study these notions \citep{Ciardelli:16dependency,Ciardelli:23book}; it allows for a uniform account of the contribution of connectives and quantifiers across statements and questions \citep{Ciardelli:18book}; it enables us to perform inferences with formulas denoting information \emph{types}, rather than specific pieces of information \citep{Ciardelli:18qait,PuncocharNoguera:26}; it allows for the definition of new modal operators that capture interesting modal notions
\citep{CiardelliRoelofsen:15idel,Ciardelli:25,Ciardelli:26action}. 
Since its development, inquisitive logic has found applications in different domains, including natural language semantics \citep[see, among many others,][]{CoppockBrochhagen:13,Anderbois:14,RoelofsenFarkas:15,Szabolcsi:15,Ciardelli:18counterfactuals,Booth:21,RoelofsenDotlacil:22}, cognitive psychology \citep{KoralusMascarenhas:13,MascarenhasKoralus:15,MascarenhasPicat:19}, formal epistemology \citep{Cohen:21} and decision theory \citep{DeverSchiller:20}.

Syntactically, inquisitive logics are typically obtained by taking an existing logic of statements, and extending it with question-forming operators. Semantically, they are interpreted in standard possible-world models for intensional logics. In order to give a uniform account of statements and questions, however, the basic semantic notion of \emph{truth at a world} is replaced by a notion of \emph{support at an information state}, where an information state (also called simply a \emph{state}) is modeled as a set of possible worlds. For a statement, to be supported at a state is to be true at each world in the state; for a question, to be supported is to be \emph{settled} in the state, i.e., to have an answer.\footnote{\label{fn:1}The move from single worlds to sets of worlds mirrors a parallel move made in the framework of \emph{team semantics} \citep{Hodges:97}, where variable assignments are replaced by sets of assignments. Team semantics has been used to define a variety of logics of dependence and independence, such as dependence logic \citep{Vaananen:07}, inclusion logic \citep{Galliani:12}, and independence logic \citep{GradelVaananen:13}. The similarity of the two semantic frameworks accounts for deep and systematic connections between inquisitive logics and team-based logics \citep[for discussion see, among others,][]{Yang:14,YangVaananen:16,Ciardelli:16dependency,Ciardelli:23book,Ciardelli:20inqi}.}

The two most widely studied systems of inquisitive logic are \emph{inquisitive propositional logic}, \inqb, and \emph{inquisitive first-order logic}, \inqbq. The former extends classical propositional logic with a question-forming connective $\lori$, called \emph{inquisitive disjunction}. Thus, in this system one has, in addition to formulas regimenting statements like ``it's raining'' ($r$) and ``it's snowing'' ($s$), also formulas regimenting the questions ``whether or not it's raining'' ($r\lori\neg r$, abbreviated $?r$) and ``whether it's raining or snowing'' ($r\lori s$). In a similar vein, the first-order system \inqbq\ extends classical first-order logic with $\lori$ and with an inquisitive existential quantifier, $\existsi$. In this system one has, in addition to standard first-order formulas like $\forall xPx$ regimenting statements like ``all students passed the test'', also formulas like $?\forall xPx$,  $\forall x?Px$, and $\existsi xPx$ regimenting, respectively, the questions ``whether all students passed the test'', ``which students passed the test'', and ``who is one student who passed the test''. Entailments in this logic capture interesting logical relations between types of information: for instance, the fact that the formulas $\exists xPx$ and $\forall x?Px$ jointly entail $\existsi xPx$ captures the fact that, given the information that an object with a certain property exists and a specification of which objects have that property, one is then in a position to give an example of an object with the relevant~property.

Inquisitive propositional logic is by now well-understood: an axiomatization is known \citep{CiardelliRoelofsen:11jpl}, as well as several other proof systems \citep{Frittella:16,ChenMa:17,Ciardelli:18qait,Muller:23}; \inqb\ has been systematically related to intermediate logics \citep{Ciardelli:09thesis,CiardelliRoelofsen:11jpl}, and weakenings based on non-classical logics of statements have been studied extensively \citep{Puncochar:16generalization,Puncochar:19,Puncochar:20,Ciardelli:20inqi}; algebraic approaches have been developed \citep{Bezhanishvili:21,Puncochar:21}, and properties such as structural completeness and interpolation have been investigated \citep{Iemhoff:16,FergusonPuncochar:25}.

By contrast, ever since its first introduction in \cite{Ciardelli:09thesis}, inquisitive first-order logic has remained rather mysterious, even with regards to its fundamental meta-theoretic properties. In particular, in spite of many investigations, the following fundamental questions are long-standing open problems:

\begin{itemize}
\item \textbf{Open Question 1: Entailment-compactness}\\
Is \inqbq\ entailment-compact, in the sense that whenever a conclusion $\psi$ follows from a set of premises $\Phi$, it also follows from a finite subset $\Phi_0\subseteq\Phi$?
\item \textbf{Open Question 2: Recursive enumerability}\\
Is the set of validities of \inqbq\ recursively enumerable? Equivalently, does \inqbq\ admit a recursive axiomatization? 
\end{itemize}
We may summarize the situation by saying that, at present, we do not know whether \inqbq\ is first-order-logic-like or second-order-logic-like with respect to its fundamental meta-theoretic properties. 

Positive answers to the two questions above have been obtained for two important fragments of \inqbq: the \emph{clant} (classical antecedent) fragment, where antecedents of implications are restricted to classical first-order formulas \citep{Grilletti:21}; and the \emph{rex} (restricted existential) fragment, where the inquisitive existential can appear only within the antecedent of an implication \citep{CiardelliGrilletti:22}. For the whole system \inqbq, however, the above questions are still open.

In this paper, we provide an answer to these long-standing open questions. We show that, in general (i.e., without specific restrictions on the signature), \inqbq\ is \emph{not} entailment compact and \emph{not} recursively axiomatizable. In other words, \inqbq\ is second-order-like with respect to its meta-theoretic properties. 

Our proof strategy is simple. Building on ideas we explored in recent work \citep{KontinenCiardelli:26}, we identify a sentence $\eta$ that, relative to a certain class of ``full'' models, expresses the finiteness of the domain. While the class of full information models is not definable in \inqbq, its complement is: in other words, there is a sentence $\theta$ capturing non-fullness. We can then produce a violation of entailment-compactness as follows: consider the set $\Phi$ containing $\eta$ along with each first-order sentence $\chi_n$ expressing that there are at least $n$ objects for $n\in\mathbb{N}$; this set can only be satisfied in a model which is not full, and so it entails $\theta$; but each finite subset $\Phi_0\subseteq\Phi$ can be satisfied in a full model, and so it does not entail $\theta$.

Using the formulas $\eta$ and $\theta$, we can also settle Open Question 2 by reducing the problem of finite validity (i.e., validity over finite structures) for first-order logic to the problem of validity in \inqbq: a first-order sentence $\aa$ is finitely valid if and only if the sentence $\eta\to\aa\lori\theta$ is valid in \inqbq\ (more precisely, in the version of \inqbq\ with rigid equality, as we will explain). Since the set of first-order validities over finite models is not recursively enumerable, neither is the set of \inqbq-validities. 

Our paper thus settles the two main open problems in the area of inquisitive first-order logic, while also opening new, interesting ones; in addition, our proof strategy provides ideas that may well find broader applicability in the study of this logic. 

The rest of the paper is structured as follows. In Section \ref{sec:background} we cover the necessary background on \inqbq. In Section \ref{sec:compactness} we show that \inqbq\ is not entailment-compact, settling Open Question 1. In Section \ref{sec:re} we show that \inqbq-validities are not recursively enumerable, settling Open Question 2. 
In  Section \ref{sec:relsig}
we generalize our main results to relational signatures without equality. In Section \ref{sec:craig} we exploit our findings to show the failure of the Craig interpolation for $\inqbq$.
In Section \ref{sec:inqbt} we extend our results to \emph{inquisitive team logic} ($\inqBT$), a system closely related to \inqbq\ but interpreted in the setting of team semantics (see Footnote \ref{fn:1}). 
Section \ref{sec:conclusion} summarizes our results and discusses further research directions.

\section{Background}
\label{sec:background}

This section provides the necessary background on \inqbq. For a more detailed introduction to this logic, and for proofs of the results, we refer to \cite{Ciardelli:23book}.

\paragraph{Syntax.} Just as in standard predicate logic, a signature $\Sigma$ consists of a set of predicate symbols and a set of function symbols, each with a corresponding arity $n\in\mathbb{N}$. 
Equality is viewed as a special binary predicate $=$, which may or may not be in the signature. A \emph{relational} signature is one that contains only predicate symbols.
Function symbols of arity 0 are called \emph{constant symbols}. 
The set of \emph{terms} is defined inductively from first-order variables and function symbols in the usual way. 
We denote inquisitive first-order logic over the signature $\Sigma$ by $\inqbq_\Sigma$, omitting reference to $\Sigma$ when convenient.

Formulas of $\inqbq_\Sigma$ are given by the following BNF definition, where $P$ is an $n$-ary predicate symbol and $t_1,\dots,t_n$ as well as  $t,t'$ are terms:
$$\phi\;:=\;P(t_1,\dots,t_n)\mid \bot\mid(\phi\land\phi)\mid(\phi\lori\phi)\mid(\phi\to\phi)\mid\forall x\phi\mid\existsi x\phi$$
Formulas that do not contain $\lori$ or $\existsi$ are called \emph{classical formulas}. The set of classical formulas can be identified with the language of standard first-order predicate logic, with a particular choice of primitives. The remaining operators can be defined in the following standard way:
\begin{multicols}{2}
\begin{itemize}
\item $\neg\phi:=\phi\to\bot$
\item $\top:=\neg\bot$
\item $(\phi\lor\psi):=\neg(\neg\phi\land\neg\psi)$
\item $\exists x\phi:=\neg\forall x\neg\phi$
\end{itemize}
\end{multicols}
\noindent
As usual, we write $(t=t')$ for an atomic formula involving the equality predicate $=$ applied to terms $t$ and $t'$, and we write $(t\neq t')$ for the negation $\neg(t=t')$. 

Following standard practice in inquisitive logic, a unary question-mark operator is defined as follows: 
$$?\phi:=(\phi\lori\neg\phi)$$
We use $\phi,\psi,\chi$ as meta-variables ranging over all formulas of \inqbq, and $\aa,\bb,\gamma$ as meta-variables ranging over classical formulas.

\paragraph{Models.} A \emph{first-order information model} for a signature $\Sigma$ is a triple $M=(W,D,I)$ where:
\begin{itemize}
\item $W$, the \emph{universe}, is a non-empty set whose elements are called \emph{worlds};
\item $D$, the \emph{domain}, is a non-empty set whose elements are called \emph{individuals};
\item $I$ equips each world $w$ with an interpretation function $I_w$, which assigns
\begin{itemize}
\item to each $n$-ary predicate symbol $P\in\Sigma$, an $n$-ary relation $I_w(P)\subseteq D^n$;
\item to each $n$-ary function symbol $f\in\Sigma$, an $n$-ary function $I_w(f): D^n\to D$.
\end{itemize}
For convenience, we also write $P_w$ or $f_w$ instead of $I_w(P)$ and $I_w(f)$.
\end{itemize}
If the equality predicate $=$ is part of the signature, its interpretation $=_w$ relative to each world is required to be an equivalence relation and a congruence with respect to all the predicate and function symbols in the signature, 
that is: 
\begin{itemize}
\item for any $n$-ary predicate $R\in\Sigma$ and $n$-tuples $(d_1,\dots,d_n)$ and $(d_1',\dots,d_n')$ with $d_1=_w d_1',\; \dots\;, d_n=_w d_n'$:
$$(d_1,\dots,d_n)\in R_w\iff (d_1',\dots,d_n')\in R_w;$$
\item for any $n$-ary function symbol $f\in\Sigma$ and $n$-tuples $(d_1,\dots,d_n)$ and $(d_1',\dots,d_n')$ with $d_1=_w d_1',\; \dots\;, d_n=_w d_n'$:  
$$f(d_1,\dots,d_n)=_w f(d_1',\dots,d_n').$$
\end{itemize}
Note that to each world $w$ in such a model $M$ we can associate a standard relational structure $\M_w=(D/{=_w},I_w^=)$ for the signature $\Sigma$, whose domain is the quotient of $D$ relative to the local equivalence relation $=_w$, and those interpretation function $I_w^=$ is the one induced on the quotient by $I_w$. 

Models where the equivalence relation $=_w$ is simply the identity relation $id_{D}=\{(d,d)\mid d\in D\}$ for each world are called id-models. In an id-model, the relational structure $\M_w$ associated with a world is simply the pair $(D,I_w)$; an id-model can thus be seen simply as a family $(\M_w)_{w\in W}$ of standard relational structures over a common domain.

\paragraph{Semantics.} An \emph{information state} (or simply a \emph{state}, for short) in a model $M=(W,D,I)$ is a set of worlds $s\subseteq W$. As usual, an assignment is a function from the set of first-order variables into $D$. If $g$ is an assignment, $x$ is a variable, and $d\in D$, we denote by $g[x\mapsto d]$ the assignment that maps $x$ to $d$ and coincides with $g$ on the remaining variables. The denotation of a term $t$ relative to a world $w$ and an assignment $g$, denoted $[t]_w^g$, is defined inductively by 
$$[x]_w^g=g(x),\qquad\qquad[f(t_1,\dots,t_n)]_w^g=f_w([t_1]_w^g,\dots,[t_1]_w^g).$$
As discussed in the introduction, the semantics of \inqbq\ is given in terms of a relation of \emph{support} which is assessed relative to information states. Given a first-order information model $M=(W,D,I)$, a state $s\subseteq W$, and an assignment $g$ from the set of variables into $D$, we define support by the following clauses:
\begin{itemize}
\item $M,s\models_g P(t_1,\dots,t_n)\iff \forall w\in s:([t_1]_w^g,\dots,[t_n]_w^g)\in P_w$
\item $M,s\models_g\bot\iff s=\emptyset$
\item $M,s\models_g\phi\land\psi\iff M,s\models_g\phi$ and $M,s\models_g\psi$
\item $M,s\models_g\phi\lori\psi\iff M,s\models_g\phi$ or $M,s\models_g\psi$
\item $M,s\models_g\phi\to\psi\iff \forall t\subseteq s: M,t\models_g\phi$ implies $M,t\models_g\psi$
\item $M,s\models_g\forall x\phi\iff\forall d\in D: M,s\models_{g[x\mapsto d]}\phi$
\item $M,s\models_g\existsi x\phi\iff\exists d\in D: M,s\models_{g[x\mapsto d]}\phi$
\end{itemize}
The support relation is persistent, i.e., if $M,s\models_g\phi$ and $t\subseteq s$, then $M,t\models_g\phi$. Moreover, the empty state trivially support any formula. As usual, the choice of the assignment $g$ does not matter in the case of sentences and closed terms, and we can thus omit reference to it.

If we say that a formula $\phi$ is supported relative to a model $M$, we mean that it is supported relative to the entire universe $W$ of the model. In symbols, we write $M\models_g\phi$ for $M,W\models_g\phi$. The following proposition guarantees that support at a state depends only on the worlds in the state, not on the remaining part of the model.

\begin{proposition}[Locality]\label{prop:locality} Given a model $M=(W,D,I)$ and a state $s\subseteq W$, define the restriction of $M$ to $s$ to be $M_{|s}:=(s,D,I_{|s})$, where $I_{|s}$ is the restriction of $I$ to $s$. Then for every assignment $g$ and formula $\phi$ we have:
$$M,s\models_g\phi\iff M_{|s},s\models_g\phi\iff M_{|s}\models_g\phi$$
\end{proposition}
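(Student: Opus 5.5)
The plan is to prove the first equivalence, $M,s\models_g\phi\iff M_{|s},s\models_g\phi$, by induction on $\phi$. Because the claim has to hold for all states and all assignments at once, I would prove the following stronger statement by induction on $\phi$: for every state $t\subseteq s$ and every assignment $g$,
$$M,t\models_g\phi\iff M_{|s},t\models_g\phi.$$
The second equivalence is then immediate. The universe of $M_{|s}$ is $s$, so by definition $M_{|s}\models_g\phi$ just means $M_{|s},s\models_g\phi$.

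Two preliminary observations make the base case work. First, for any world $w\in s$, the interpretation $I_w$ in $M_{|s}$ is the same as in $M$, since $I_{|s}$ simply restricts $I$ to $s$. A straightforward induction on terms then gives $[t']_w^g$ the same value in both models. Second, the domain $D$ is shared by the two models. With these in hand, the cases go as follows.

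\begin{itemize}
\item Atoms $P(t_1,\dots,t_n)$. The support clause quantifies only over worlds $w\in t\subseteq s$ and refers only to $P_w$ and term denotations at $w$, all of which agree in the two models.
\item $\bot$. Both sides say $t=\emptyset$.
\item $\land$ and $\lori$. These follow directly from the induction hypothesis at the same state $t$.
\item $\forall$ and $\existsi$. These follow from the induction hypothesis applied to the modified assignments $g[x\mapsto d]$. Since $D$ is the same in both models, the range of $d$ is the same. This is why the hypothesis must be stated for all assignments.
\end{itemize}

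The only case that needs some care is implication. The clause for $\to$ quantifies over substates $u\subseteq t$. Because $t\subseteq s$, every such $u$ is also a subset of $s$. The substates of $t$ are therefore literally the same sets whether $t$ is viewed as a state of $M$ or of $M_{|s}$, and the induction hypothesis applies to each $u$ for both $\phi$ and $\psi$. This is exactly why the statement is generalized to all $t\subseteq s$ rather than just $s$ itself. Once that generalization is in place, no real obstacle remains. Taking $t=s$ gives the proposition.
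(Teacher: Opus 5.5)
Your proof is correct and is the standard argument. The paper states Locality without proof and defers to the book; your induction on $\phi$, strengthened to all substates $t\subseteq s$ and all assignments, is what that reference needs, and implication is the only case where the strengthening to substates is really used. The one tacit point is that $M_{|s}$ is a legitimate model, which holds because the congruence conditions on $=_w$ are imposed world by world; the degenerate case $s=\emptyset$, where the universe of $M_{|s}$ would be empty, is trivial because the empty state supports every formula.
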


\paragraph{Entailment, validity, and equivalence.} The logical notions of entailment, validity, and equivalence are defined in the obvious way: 
\begin{itemize}
\item A set of formulas $\Phi$ \emph{entails} a formula $\psi$, denoted $\Phi\models\psi$, if for any model $M$, state $s$ and assignment $g$, if $M,s\models_g\phi$ for all $\phi\in\Phi$, then $M,s\models_g\psi$.
\item A formula $\psi$ is \emph{valid}, denoted $\models\psi$, if it is entailed by the empty set, i.e., if it is supported relative to every model, state, and assignment. 
\item Two formulas $\phi$ and $\psi$ are \emph{equivalent}, denoted $\phi\equiv\psi$, if for every model $M$, state $s$, and assignment $g$ we have $M,s\models_g\phi\iff M,s\models_g\psi$. 
\end{itemize}
Corresponding notions of id-entailment ($\Phi\models_{\id}\psi$), id-validity ($\Phi\models_{\id}\psi$), and id-equivalence ($\phi\equiv_\id\psi$) are defined by restricting the class of models to id-models, i.e., models where `=' is interpreted at each world as the identity relation on the domain.

\paragraph{Truth and conservativity.} Although the primitive semantic notion for \inqbq\ is support at a state, a notion of truth at a world is recovered by defining truth at a world $w$ as support with respect to the corresponding singleton $\{w\}$: 
$$M,w\models_g\phi\overset{\text{def}}{\iff} M,\{w\}\models_g\phi$$
The following proposition ensures that, for classical formulas, the resulting truth-conditions coincide with the familiar ones.

\begin{proposition}\label{prop:conservative tc} Let $M$ be a model for \inqbq, $w$ a world, and $g$ an assignment. For any classical formula $\aa$ we have 
$$M,w\models_g\aa\iff \M_w\models_g\aa$$
where the expression on the right denotes satisfaction in standard Tarskian semantics.
\end{proposition}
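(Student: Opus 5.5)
The plan is to argue by induction on the structure of the classical formula $\aa$, proving the claim simultaneously for all assignments $g$. Classical formulas use only atoms, $\bot$, $\land$, $\to$ and $\forall$, so the induction has five cases. Throughout, for an element $d\in D$ we write $[d]$ for its $=_w$-class; the Tarskian assignment on $\M_w$ induced by $g$ is $x\mapsto [g(x)]$.

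As a preliminary step, we check by induction on terms that the class of $[t]^g_w$ is the Tarskian value of $t$ in $\M_w$. This uses the congruence condition on function symbols, which is exactly what makes the induced interpretation $I^=_w$ well defined.

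The cases then go as follows.
\begin{itemize}
\item \emph{Atoms.} $M,\{w\}\models_g P(t_1,\dots,t_n)$ holds iff $([t_1]^g_w,\dots,[t_n]^g_w)\in P_w$. By the congruence condition on predicates, this holds iff the tuple of classes lies in the induced relation, which is Tarskian satisfaction in $\M_w$. For $P$ the equality symbol, it reduces to $[t]^g_w=_w[t']^g_w$, i.e. equality of classes.
\item \emph{Falsum.} $\{w\}\neq\emptyset$, so $\bot$ fails on both sides.
\item \emph{Conjunction.} This is immediate from the induction hypothesis.
\item \emph{Implication.} The only subsets of $\{w\}$ are $\emptyset$ and $\{w\}$. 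The empty state supports everything, so $M,\{w\}\models_g\aa\to\bb$ holds iff ($M,\{w\}\models_g\aa$ implies $M,\{w\}\models_g\bb$). The induction hypothesis turns this into the Tarskian clause.
\item \emph{Universal quantifier.} Support at $\{w\}$ means that for every $d\in D$ we have $M,\{w\}\models_{g[x\mapsto d]}\aa$. By the induction hypothesis, this says $\M_w$ satisfies $\aa$ under the assignment sending $x$ to $[d]$, for every $d\in D$. Since the quotient map $D\to D/{=_w}$ is surjective, this is the same as quantifying over all elements of $\M_w$.
\end{itemize}

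The only delicate points are the ones involving equality: the term lemma, the atomic case, and the surjectivity step in the $\forall$ case. In all of them the congruence requirements on $=_w$ carry the argument; without equality in the signature, $\M_w=(D,I_w)$ and these steps are trivial. Note that the proof never uses locality, nor the inquisitive operators $\lori$ and $\existsi$, which is consistent with $\aa$ being classical.
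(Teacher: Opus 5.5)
Your proof is correct and takes essentially the same approach as the source: the paper gives no proof of this proposition itself and defers to Ciardelli's book, where it is established by the same routine induction on classical formulas. That induction uses the same two ingredients you identify, namely that a singleton state has only itself and $\emptyset$ as substates, and, for non-rigid identity, that the $=_w$-class of $[t]^g_w$ is the value of $t$ in the quotient structure $\M_w$. Your explicit use of the induced assignment $x\mapsto[g(x)]$, and of the surjectivity of the quotient map in the $\forall$ case, is exactly what is needed to make the right-hand side meaningful when $=_w$ is not the identity.
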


For some formulas $\phi$, support simply boils down to global truth, in the sense that for every model $M$, state $s$, and assignment $g$ we have
$$M,s\models_g\phi\iff\forall w\in s: M,w\models_g\phi$$
If this is the case, we say that the formula $\phi$ is \emph{truth-conditional}. The following proposition ensures that, up to equivalence, the truth-conditional formulas of \inqbq\ are exactly the classical formulas.

\begin{proposition}\label{prop:tc} A formula of \inqbq\ is truth-conditional if and only if it is equivalent to a classical formula. In particular, classical formulas are truth-conditional.
\end{proposition}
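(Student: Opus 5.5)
Two directions, the ``in particular'' clause coming first.

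\emph{Classical formulas are truth-conditional.} I will prove by induction on a classical formula $\aa$ that for every model $M$, state $s$ and assignment $g$,
$$M,s\models_g\aa\iff\forall w\in s: M,w\models_g\aa .$$
Combined with Proposition~\ref{prop:conservative tc}, this says $\aa$ is supported at $s$ iff it is classically true in every $\M_w$ with $w\in s$. The cases go as follows.
\begin{itemize}
\item \emph{Atoms.} Immediate, since the clause is already a universal quantification over $w\in s$.
\item \emph{$\bot$.} Supported only at $\emptyset$, and $\{w\}\neq\emptyset$, so both sides hold exactly when $s=\emptyset$.
\item \emph{Conjunction.} Direct from the induction hypothesis.
\item \emph{Universal quantifier.} The two universal quantifiers (over $d\in D$ and over $w\in s$) commute.
\item \emph{Implication $\aa\to\bb$.} Left to right, apply persistence to the singleton subsets $\{w\}\subseteq s$. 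Right to left, take $t\subseteq s$ with $t\models\aa$. By the induction hypothesis every $w\in t$ satisfies $\aa$, hence, being true at $w$, satisfies $\bb$. By the induction hypothesis again, $t\models\bb$.
\end{itemize}
The cases for $\lori$ and $\existsi$ never arise, because they are excluded from classical formulas. This is exactly where truth-conditionality fails in general.

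\emph{Equivalent to classical implies truth-conditional.} If $\phi\equiv\aa$ with $\aa$ classical, then support of $\phi$ at $s$ coincides with support of $\aa$ at $s$. That is truth at every world of $s$ for $\aa$, hence, applying the equivalence to singletons, truth at every world for $\phi$.

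\emph{Truth-conditional implies equivalent to classical.} This is the main step. For each formula $\phi$ I will define a classical formula $\phi^{cl}$ with $\phi$ and $\phi^{cl}$ true at exactly the same worlds. Then, if $\phi$ is truth-conditional, both formulas are supported at $s$ iff true at every world of $s$, so $\phi\equiv\phi^{cl}$. The natural choice is the classical variant obtained by replacing $\lori$ with $\lor$ and $\existsi$ with $\exists$. I then prove by induction that $M,\{w\}\models_g\phi\iff M,\{w\}\models_g\phi^{cl}$. The key observation is that on a singleton the clauses for $\lori$ and $\lor$ coincide, and likewise those for $\existsi$ and $\exists$. The substates of $\{w\}$ are just $\emptyset$ and $\{w\}$, and $\emptyset$ supports everything. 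So $\neg\psi$ is true at $w$ iff $\psi$ is not, and $\lor$, $\exists$ behave classically there. The other connectives also reduce to their singleton behaviour, the implication clause only quantifying over $\emptyset$ and $\{w\}$.

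The expected obstacle is the implication case. I cannot apply the induction hypothesis to support at arbitrary states, only to truth at worlds. This is fine because on $\{w\}$ the implication clause only involves $\emptyset$, where everything is supported, and $\{w\}$ itself, where the induction hypothesis applies. The remaining cases are routine.
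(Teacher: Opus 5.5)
Your proof is correct and is essentially the standard argument. The paper gives no proof of its own and defers to \cite{Ciardelli:23book}, where the result is obtained the same way: classical formulas are shown truth-conditional by induction, and the classical variant $\phi^{cl}$ (replace $\lori$ by $\lor$ and $\existsi$ by $\exists$) is shown to be true at exactly the same worlds as $\phi$, so any truth-conditional $\phi$ is equivalent to $\phi^{cl}$.
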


\noindent
Together, propositions \ref{prop:conservative tc} and \ref{prop:tc} imply that with respect to classical formulas, our semantics is essentially equivalent to standard Tarskian semantics. As a consequence, for classical formulas entailment coincides with entailment is classical first-order logic.

\begin{proposition}[Conservativity] If $\Gamma$ is a set of classical formulas and $\aa$ is a classical formula, we have $$\Gamma\models\aa\iff \Gamma\models_{\textsf{FOL}}\aa$$where $\models_{\textsf{FOL}}$ denotes entailment in classical first-order logic.
\end{proposition}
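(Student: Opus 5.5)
We prove the two directions of the biconditional separately. Both rest on two facts: by Proposition~\ref{prop:tc}, every classical formula is truth-conditional, and by Proposition~\ref{prop:conservative tc}, truth of a classical formula at a world $w$ coincides with Tarskian satisfaction in the structure $\M_w$. Since every formula in $\Gamma\cup\{\aa\}$ is classical, support for these formulas at a state $s$ reduces to truth at each world of $s$.

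For the direction from right to left, assume $\Gamma\models_{\textsf{FOL}}\aa$. Take a model $M$, a state $s$, and an assignment $g$ such that $M,s\models_g\gamma$ for every $\gamma\in\Gamma$. We need $M,s\models_g\aa$.
\begin{itemize}
\item By truth-conditionality, $M,w\models_g\gamma$ holds for every $w\in s$ and every $\gamma\in\Gamma$.
\item By Proposition~\ref{prop:conservative tc}, $\M_w\models_g\gamma$ for each $w\in s$.
\item One point needs care here: $g$ maps variables into $D$, whereas $\M_w$ has domain $D/{=_w}$. We therefore read $g$ in $\M_w$ as the composition of $g$ with the quotient map, which is how Proposition~\ref{prop:conservative tc} must be understood.
\item Classical entailment now gives $\M_w\models_g\aa$ for each $w\in s$.
\item Proposition~\ref{prop:conservative tc} again gives $M,w\models_g\aa$, and truth-conditionality gives $M,s\models_g\aa$.
\end{itemize}
If $s=\emptyset$, the conclusion is immediate, since the empty state supports every formula.

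For the direction from left to right, assume $\Gamma\models\aa$. Take any first-order structure $\mathcal{A}=(A,J)$ and assignment $g$ with $\mathcal{A}\models_g\Gamma$.
\begin{itemize}
\item Build the one-world id-model $M=(\{w\},A,I)$ with $I_w=J$, interpreting $=$ (if it is in the signature) as the identity on $A$.
\item Identity is trivially a congruence, so this is a legitimate model, and $\M_w$ is just $\mathcal{A}$.
\item By Proposition~\ref{prop:conservative tc}, $M,\{w\}\models_g\gamma$ for every $\gamma\in\Gamma$.
\item The assumption $\Gamma\models\aa$ gives $M,\{w\}\models_g\aa$.
\item Proposition~\ref{prop:conservative tc} then gives $\mathcal{A}\models_g\aa$.
\end{itemize}

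There is no real obstacle in either direction. The only delicate point is the bookkeeping between assignments into $D$ and elements of the quotient domain $D/{=_w}$ when equality is not interpreted as identity. We will handle it by noting that every element of the quotient has a representative in $D$, so assignments correspond surjectively, and that Tarskian satisfaction is invariant under the choice of representatives because $=_w$ is a congruence.
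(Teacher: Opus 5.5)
Your proof is correct and follows the paper's route. The paper obtains conservativity by combining Proposition~\ref{prop:tc} (classical formulas are truth-conditional) with Proposition~\ref{prop:conservative tc} (truth at a world agrees with Tarskian satisfaction in $\M_w$), exactly as you do, with your one-world id-model spelling out the left-to-right direction and your remark about composing $g$ with the quotient map correctly clarifying how Proposition~\ref{prop:conservative tc} is to be read in non-id models.
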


\paragraph{Questions.} In contrast to classical formulas, formulas involving the inquisitive operators $\lori$ and $\existsi$ are typically not truth-conditional. For them, support at a state does not reduce to a local condition on the individual worlds in the state, but is instead a global matter. In this section, we introduce some particular examples of such formulas that play a key role in the following. 

\begin{example}[Mention-all questions] Let $P$ be an $n$-ary predicate and let $\overline x$ be a sequence of $n$ variables. The sentence $\forall\overline x?P(\overline x)$ is supported in a state iff $P$ has the same extension at each world in the state:
$$M,s\models_g\forall\overline x?P(\overline x)\iff \forall w,w'\in s: P_w=P_{w'}$$
Intuitively, $\forall\overline x?P(\overline x)$ captures the question ``which objects (or tuples) satisfy $P$'', a \emph{mention-all} question that asks for a complete specification of the extension of $P$. 
\end{example}

In particular, the formula $\forall x\forall y?(x=y)$ expresses  the rigidity of equality, i.e., fact that the equality predicate `=' is interpreted in the same way at each world in the evaluation state. Obviously, this formula is valid relative to id-models, where `=' is interpreted rigidly as the identity relation; in fact, it fully characterizes the logic of id-models relative to the logic of all first-order information models: a claim of id-entailment is equivalent to a claim of general entailment with the extra premise that equality is rigid \citep[for a proof, see Proposition 5.5.31 in][]{Ciardelli:23book}.

\begin{proposition}[General entailment and id-entailment]\label{prop:id-entailment} For any set of formulas $\Phi$ and any formula $\psi$:
$$\Phi\models_\id\psi\iff \Phi,\forall x\forall y?(x=y)\models\psi$$
\end{proposition}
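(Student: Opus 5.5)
The plan is to prove both directions separately; the right-to-left direction is immediate and the left-to-right direction carries the content.

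\emph{Right to left.} Suppose $\Phi,\forall x\forall y?(x=y)\models\psi$. Take an id-model $M$, a state $s$ and an assignment $g$ with $M,s\models_g\phi$ for all $\phi\in\Phi$. Since $=_w$ is the identity on $D$ at every world, the characterization of mention-all questions in the Example above gives $M,s\models_g\forall x\forall y?(x=y)$. The hypothesis then yields $M,s\models_g\psi$.

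\emph{Left to right.} Suppose $\Phi\models_\id\psi$, and let $M=(W,D,I)$, $s$ and $g$ be such that $s$ supports all of $\Phi$ together with $\forall x\forall y?(x=y)$. By Locality (Proposition \ref{prop:locality}) we may replace $M$ by $M_{|s}$ and assume $s=W$. By the mention-all characterization, there is then a single equivalence relation $E$ with $=_w=E$ for all $w\in W$. I would build the quotient model $M'=(W,D/E,I')$, where $I'_w$ is the interpretation induced by $I_w$ on the classes. This is well defined because $E=\,=_w$ is a congruence for $I_w$ at every world; the point is that the same $E$ works at every world, which is exactly what rigidity buys. In $M'$, equality is interpreted as $=_w/E$, which is the identity on $D/E$, so $M'$ is an id-model. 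Let $g'$ be the assignment $g'(x)=[g(x)]_E$.

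The key lemma is then, by induction on $\phi$: for every state $t\subseteq W$ and every assignment $h$,
$$M,t\models_h\phi\iff M',t\models_{h'}\phi,$$
where $h'$ is $h$ composed with the projection to $D/E$. The steps go as follows.
\begin{itemize}
\item For terms, $[\tau]^{h'}_{w}$ in $M'$ equals the class of $[\tau]^h_w$, by the congruence property.
\item For atoms, the claim holds because predicates are $E$-invariant.
\item The clauses for $\bot$, $\land$, $\lori$ and $\to$ are direct, since the states are the same in both models.
\item For $\forall$ and $\existsi$, the assignments $h[x\mapsto d]$ correspond to $h'[x\mapsto[d]]$, and every class in $D/E$ is of the form $[d]$. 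So quantifying over $D$ in $M$ matches quantifying over $D/E$ in $M'$.
\end{itemize}
By the lemma, $M',W\models_{g'}\Phi$. By id-entailment, $M',W\models_{g'}\psi$, and transferring back through the lemma gives $M,W\models_g\psi$.

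The main obstacle is the quantifier step of the induction. It needs the correspondence $h\mapsto h'$ to be surjective onto the assignments of $M'$, and it must respect updates. Both hold because the projection $D\to D/E$ is surjective and $(h[x\mapsto d])'=h'[x\mapsto[d]]$. The other delicate point is that the quotient construction requires one uniform $E$ across all worlds, and this is exactly where the rigidity premise $\forall x\forall y?(x=y)$ is used.
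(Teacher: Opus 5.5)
Your proof is correct. The paper gives no proof of its own and cites Proposition~5.5.31 of Ciardelli's book; your argument is the standard quotient argument behind that result: the right-to-left direction is immediate, and left-to-right restricts to $s$, uses rigidity to get a single congruence $E$, and shows by induction that the id-model $(W,D/E,I')$ supports the same formulas under $h\mapsto\pi\circ h$. The one case to add is the degenerate $s=\emptyset$: there $M_{|s}$ is not a model, since universes must be non-empty, but $\psi$ is supported trivially anyway.
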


\noindent
This connection will play an important role below: we will prove our key results in the simpler setting of id-models, and then extend them to the general case by means of this connection.

\begin{example}[Mention-some questions] Let $P$ be an $n$-ary predicate and let $\overline x$ be a sequence of $n$ variables. The sentence $\existsi\overline x P(\overline x)$ is supported in a state iff some particular tuple $\overline d$ satisfies $P$ in every world in the state:
$$M,s\models_g\existsi \overline xP(\overline x)\iff \exists \overline d\in D^n\forall w\in s:\o d\in P_w$$
Intuitively, $\existsi xP(\overline x)$ captures the question ``what is an example of an object (or tuple) satisfying $P$'', a so-called \emph{mention-some} question that asks for an instance of $P$. 
\end{example}

\begin{example}[Identification questions] For a term $t$, define corresponding formulas $\lambda t$ and $\mu t$ as follows:
$$\lambda t\;:=\;\existsi x(x=t)\qquad\qquad \mu t\;:=\;\forall x?(x=t) $$
where $x$ is an arbitrarily chosen variable not occurring in $t$. 
Relative to id-models, these formulas are equivalent: they are both supported in a state if the interpretation of $t$ is constant throughout the state. More formally, if $M$ is an id-model, $s$ a state, and $g$ an assignment, we have:
$$M,s\models_g \lambda t\iff M,s\models_g\mu t\iff \forall w,w'\in s: [t]_w^g=[t]_{w'}^g$$
Intuitively, $\lambda t$ and $\mu t$ are two different ways of capturing the identification question ``what is $t$'', which asks to pin down the denotation of the term. 
\end{example}

\paragraph{Dependencies.} \inqbq\ provides us with a systematic way of expressing dependencies as implications among questions: intuitively, if $\phi$ and $\phi'$ are formulas representing the questions $Q$ and $Q'$ respectively, then the implication $\phi\to\phi'$ is supported in a state in case, relative to that state, any answer to $Q$ yields a corresponding answer to $Q'$. We introduce a special case of dependence formulas that plays a key role below.

If $\o t=t_1,\dots,t_n$ is a sequence of terms and $t'$ is a term, we use the following abbreviation, adopted from Dependence Logic:
$$\dep{\o t}{t'}\quad:=\quad \lambda t_1\land \dots\land\lambda t_n\to \lambda t'$$
In the context of an id-model $M$, the above formula is supported in a state $s$ if, throughout $s$, the value of $t'$ is functionally determined by the values of $t_1,\dots,t_n$. More formally, we have
\begin{eqnarray*}
M,s\models_g \,\dep{\o t}{t'}&\iff & \forall w,w'\in s: [\,\o t\,]_w^g=[\,\o t\,]_{w'}^g\text{ implies }[t']_w^g=[t']_{w'}^g
\end{eqnarray*}
where $[\,\o t\,]_w^g=([t_1]_w^g\dots,[t_n]_w^g)$.
Note that, since the formulas $\lambda t$ and $\mu t$ are equivalent relative to id-models, the same result could also have been obtained by defining $\dep{\o t}{t'}$ as $\mu t_1\land\dots\land\mu t_n\to \mu t'$.\footnote{This is relevant since the definition of $\dep{\o t}{t'}$ by means of $\mu$, unlike the one that uses $\lambda$, yields a formula belonging to a known well-behaved fragment of \inqbq, namely, the rex fragment \citep{CiardelliGrilletti:22}.}

%
%

\section{\inqbq\ is not entailment-compact}
\label{sec:compactness}

In this section, we settle Open Question 1, proving that \inqbq\ is \emph{not}, in general, entailment-compact. 


Consider the signature $\Sigma=\{=_2,a_0,b_0\}$ containing only the equality predicate and two constant symbols $a$ and~$b$, and consider an id-model $M$ for this signature with set of worlds $W$ and domain~$D$. Relative to each world $w$, the interpretation function assigns to each constant an extension, $a_w\in D$ and $b_w\in D$. Now to an information state $s\subseteq W$ we can associate a binary relation $R_s\subseteq D^2$ defined by 
$$R_s=\{(a_w,b_w)\mid w\in s\}.$$
We say that the model $M=(W,D,I)$ is \emph{full} in case $R_W=D^2$. 

A fact that will be useful below is that to each domain $D\neq\emptyset$ we can associate a corresponding canonical full id-model $M_D$, defined in the following way.

\begin{definition}\label{def:canonical id-model} Given  $D\neq\emptyset$, the canonical full id-model for the signature $\Sigma$ based on $D$ is the id-model $M_D$ whose universe is $W_D=D^2$, whose domain is $D$, and whose interpretation function is such that for a world $w=(d,e)$ we have $a_w=d$ and $b_w=e$.
\end{definition}

\noindent
In this model, for each state $s\subseteq W_D=D^2$ we simply have $R_s=s$. 
In particular, $R_{W_D}=W_D=D^2$, showing that $M_D$ is a full model.

We first prove that, relative to the class of full id-models, $\inqbq_\Sigma$ can define the finiteness of the domain. More precisely, we  prove that relative to this class of models, the finiteness of the domain is captured by the following sentence of $\inqbq_\Sigma$:
$$\eta\;:=\;( \dep{a}{b}\, \wedge \dep{b}{a}\land \existsi x (x\neq b)\; \rightarrow\;  \existsi x  (x\neq a)).$$

\begin{lemma}\label{lemma:1}If $M$ is a full id-model with domain $D$: $M\models\eta\iff D$ is finite.\footnote{Recall that $M\models\eta$ abbreviates $M,W\models\eta$, where $W$ is the universe of $M$.}
\end{lemma}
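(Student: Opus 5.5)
The plan is to compute, for an arbitrary state $s$ of an id-model, what each component of $\eta$ says about the relation $R_s=\{(a_w,b_w)\mid w\in s\}$. Then I would use fullness to show that these relations range over all subsets of $D^2$, so that $\eta$ turns into a purely set-theoretic statement about $D$. That statement will be Dedekind-finiteness.

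\textbf{Step 1 (translating the antecedent and consequent).} Fix an id-model $M$ and a state $s$.
\begin{itemize}
\item By the support condition for $\dep{\o t}{t'}$ recalled above, $M,s\models\dep{a}{b}$ iff for all $w,w'\in s$, $a_w=a_{w'}$ implies $b_w=b_{w'}$. Equivalently, $R_s$ is a partial function from $D$ to $D$.
\item Symmetrically, $M,s\models\dep{b}{a}$ iff $R_s$ is injective. So the two dependence conjuncts together say exactly that $R_s$ is a partial injection.
\item By the clauses for $\existsi$, negation and atoms, $M,s\models\existsi x(x\neq b)$ iff there is $d\in D$ with $d\neq b_w$ for every $w\in s$. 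Here I use that $x\neq b$ is classical and hence truth-conditional (Proposition~\ref{prop:tc}). This means the range of $R_s$ is not all of $D$.
\item Likewise, $M,s\models\existsi x(x\neq a)$ iff the domain of $R_s$ is not all of $D$.
\end{itemize}
Since $\to$ quantifies over all substates $t\subseteq W$, we get the following: $M\models\eta$ iff for every $t\subseteq W$, if $R_t$ is a partial injection with non-full range, then $R_t$ has non-full domain.

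\textbf{Step 2 (using fullness).} If $M$ is full, then every relation $S\subseteq D^2$ equals $R_t$ for some $t\subseteq W$: for each pair in $S$ pick one world realizing it, and let $t$ be the set of chosen worlds. Conversely, every $R_t$ is trivially some subset of $D^2$. Hence $M\models\eta$ iff every partial injection $f\subseteq D^2$ whose range misses some element also fails to be total. In other words, $M\models\eta$ iff there is no injection $D\to D$ that is not surjective, i.e.\ iff $D$ is Dedekind-finite.

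\textbf{Step 3 (Dedekind-finite versus finite).} It remains to check that Dedekind-finiteness coincides with finiteness.
\begin{itemize}
\item If $D$ is finite, every injection $D\to D$ is a bijection by pigeonhole, so $\eta$ holds.
\item If $D$ is infinite, choose a countable sequence of distinct elements $d_0,d_1,\dots$ (this uses countable choice, which I take for granted in the ambient set theory). The map sending $d_n\mapsto d_{n+1}$ and fixing everything else is an injection that misses $d_0$. The corresponding state $t$ supports the antecedent of $\eta$ but not its consequent, so $M\not\models\eta$.
\end{itemize}

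I expect the main obstacle to be the bookkeeping in Step 1: getting the direction of $\dep{a}{b}$ versus $\dep{b}{a}$ right, and making sure the implication is evaluated over all substates of $W$ rather than only at $W$ itself. Step 2 then reduces these substates exactly to arbitrary subsets of $D^2$, which is precisely where fullness is used.
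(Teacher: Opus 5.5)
Your proposal is correct and follows the paper's proof essentially step for step. Both arguments translate each conjunct of $\eta$ into a property of $R_s$ (partial function, injective, range resp.\ domain not all of $D$), use fullness to realize every relation on $D$ as some $R_t$, and conclude that $\eta$ fails exactly when there is a non-surjective injection $D\to D$. You are slightly more explicit than the paper, which takes Dedekind-finiteness $=$ finiteness for granted, and in Step 2 you could avoid choosing witness worlds by simply taking $t=\{w\in W\mid (a_w,b_w)\in S\}$, which gives $R_t=S$ directly by fullness.
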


\begin{proof} Suppose $M$ is a full id-model with domain $D$. We prove that $M\not\models\eta\iff D$ is infinite. By definition, $M\not\models\eta$ iff there is an $s\subseteq W$ that supports the antecedent of $\eta$ but not the consequent. As we discussed above, this state is associated with a relation $R_s=\{(a_w,b_w)\mid w\in s\}$. Spelling out the semantics, we find that:
\begin{itemize}
\item $M,s\models{\dep{a}{b}}\iff R_s$ is a function;
\item $M,s\models{\dep{b}{a}}\iff R_s$ is injective;
\item $M,s\not\models\existsi x(x\neq a)\iff \text{dom}(R_s)=D$;
\item $M,s\models\existsi x(x\neq b)\iff \text{ran}(R_s)\neq D$.
\end{itemize}
\noindent
So, a state $s$ falsifies the conditional $\eta$ just in case the associated relation $R_s$ is an injective function from $D$ to $D$ which is not surjective. Since $M$ is a full model, every binary relation $R\subseteq D\times D$ is represented as $R_s$ for some state $s\subseteq W$. So, in a full model, a state falsifying the conditional exists iff there exists an injective function defined on $D$ which is not surjective, which holds iff $D$ is infinite.
\end{proof}

\noindent
The fullness of an id-model cannot be defined in \inqbq, due to persistency: support for \inqbq-sentences is preserved when we move from a model $M$ to a sub-model $M'$ obtained by restricting the set of worlds to a subset $W'\subseteq W$. However, the property of fullness is \emph{not} preserved by this operation: if $M$ is full, the submodel $M'$ is not necessarily full. Rather, the property of fullness is preserved in the opposite direction, namely, upwards: if the sub-model $M'$ is full, the entire model $M$ must be full. But this means that the complementary property of \emph{non-fullness} is downward-persistent. We might thus expect it to be definable in \inqbq. And indeed, it is.

\begin{lemma}\label{lemma:2} For any id-model $M$, $M\text{ is full}\iff M\not\models\theta$, where:
$$\theta\;:=\;\existsi x\existsi y\neg(x=a\land y=b)$$
\end{lemma}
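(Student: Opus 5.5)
The plan is to unfold the support clauses for $\theta$ at the full universe $W$ of an id-model $M$, and show that $M\models\theta$ holds exactly when $R_W\neq D^2$. Since $\existsi$ is interpreted by choosing a witness, $M,W\models\theta$ holds iff there are $d,e\in D$ with $M,W\models_{g}\neg(x=a\land y=b)$, where $g$ maps $x\mapsto d$ and $y\mapsto e$. The formula $\neg(x=a\land y=b)$ is classical, so by Proposition~\ref{prop:tc} it is truth-conditional. Hence its support at $W$ amounts to truth at every world $w\in W$. By Proposition~\ref{prop:conservative tc}, and because $M$ is an id-model (so $=_w$ is the identity on $D$), truth at $w$ means $\neg(d=a_w\land e=b_w)$, that is, $(a_w,b_w)\neq(d,e)$.

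Combining these steps gives the chain
$$M\models\theta\iff\exists d,e\in D\;\forall w\in W:(a_w,b_w)\neq(d,e)\iff\exists (d,e)\in D^2\setminus R_W\iff R_W\neq D^2.$$
The last condition says precisely that $M$ is not full, and negating both sides yields $M$ full $\iff M\not\models\theta$.

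Nothing here is a real obstacle. The only point needing care is the use of the id-model assumption: it is what lets the atoms $x=a$ and $y=b$ be read as literal identity in $D$, rather than identity modulo a world-relative congruence. I would state that step explicitly; everything else is a routine unwinding of definitions.
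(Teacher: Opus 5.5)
Your proposal is correct and follows essentially the same route as the paper: the same chain of equivalences from $M\models\theta$ through $\exists d,e\,\forall w\,(a_w,b_w)\neq(d,e)$ to $R_W\neq D^2$. Your version simply spells out the first step explicitly (truth-conditionality of the classical body and the id-model reading of $=$), which the paper leaves implicit.
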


\begin{proof} We have the following:
\begin{eqnarray*}
M\models\theta&\iff &\exists d,e\in D\forall w\in W:(d,e)\neq (a_w,b_w)\\
&\iff &\exists d,e\in D:(d,e)\not\in R_W\\
&\iff &R_W\neq D^2\\
&\iff &M\text{ is not full } \qedhere
\end{eqnarray*}
\end{proof}

\noindent
Using the formulas $\eta$ and $\theta$, we can show the failure of compactness for id-entailment.

\begin{theorem}\label{theor:compactness} \inqbq\ over the signature $\Sigma=\{=_2,a_0,b_0\}$ is not entailment-compact relative to id-models. That is, there is a set of formulas $\Phi\cup\{\psi\}$ such that $\Phi\models_\id\psi$, but for all finite $\Phi_0\subseteq\Phi$, $\Phi_0\not\models_\id\psi$.
\end{theorem}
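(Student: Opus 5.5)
I will take $\Phi=\{\eta\}\cup\{\chi_n\mid n\geq 1\}$ and $\psi=\theta$, where $\chi_n$ is the classical sentence $\exists x_1\dots\exists x_n\bigwedge_{i<j}x_i\neq x_j$ stating that there are at least $n$ objects. Since $\chi_n$ is classical and equality is the identity at every world of an id-model, $\chi_n$ is supported at a nonempty state $s$ of an id-model with domain $D$ exactly when $|D|\geq n$, by Propositions~\ref{prop:conservative tc} and~\ref{prop:tc}. At the empty state everything is supported, so we may ignore it.

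\emph{Entailment.} Let $M$ be an id-model, $s$ a state and $g$ an assignment with $M,s\models_g\phi$ for all $\phi\in\Phi$; the sentences make $g$ irrelevant. If $s=\emptyset$ then $\theta$ is supported trivially. Otherwise, by Locality (Proposition~\ref{prop:locality}) we may pass to $M_{|s}$, which is again an id-model with the same domain $D$, and which supports every $\chi_n$ and $\eta$. The $\chi_n$ force $D$ to be infinite. Suppose $M_{|s}$ were full. Then Lemma~\ref{lemma:1} would give $M_{|s}\not\models\eta$, a contradiction. So $M_{|s}$ is not full, and Lemma~\ref{lemma:2} gives $M_{|s}\models\theta$. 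By Locality again, $M,s\models\theta$.

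\emph{Non-entailment from finite subsets.} Let $\Phi_0\subseteq\Phi$ be finite, and let $N$ be the largest $n$ with $\chi_n\in\Phi_0$, or $1$ if there is none. Take $D$ finite with $|D|=N$ and consider the canonical full id-model $M_D$ of Definition~\ref{def:canonical id-model}. It supports every $\chi_n$ with $n\leq N$ at its (nonempty) universe. By Lemma~\ref{lemma:1} it supports $\eta$, since $D$ is finite. By Lemma~\ref{lemma:2} it does not support $\theta$, since it is full. Hence $\Phi_0\not\models_\id\theta$.

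There is no real obstacle here; the work is done by the two lemmas. The only points needing care are:
\begin{itemize}
\item handling the evaluation state $s$ rather than the whole universe, which is done by restricting via Locality and noting that id-models are closed under restriction;
\item checking that the $\chi_n$ pin down domain cardinality in id-models.
\end{itemize}
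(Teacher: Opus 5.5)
Your proof is correct and takes essentially the same route as the paper. You use the same $\Phi=\{\eta\}\cup\{\chi_n\}$ and $\psi=\theta$; Lemmas~\ref{lemma:1} and~\ref{lemma:2} handle the infinite case, and the canonical full model $M_D$ on a finite domain is the countermodel. The only differences are cosmetic: you take $|D|$ equal to the largest index in $\Phi_0$ rather than strictly larger, and you make explicit the Locality step (passing from an arbitrary state $s$ to $M_{|s}$) that the paper leaves implicit by reasoning only about global support.
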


\begin{proof} Let the sentences $\eta$ and $\theta$ be defined as above. Consider for each $n$ a classical sentence $\chi_n$ saying that $D$ contains at least $n$ elements, and let $X=\{\chi_n\mid n\in\mathbb N\}$. We claim that we have the following counterexample to the compactness of id-entailment:
\begin{enumerate}
\item $X,\eta\models_\id\theta$
\item $X_0,\eta\not\models_\id\theta$ for all finite $X_0\subseteq X$
\end{enumerate}
To prove item 1, need to show that for all id-models $M$,
$$(M\models\chi_n\text{ for all $n\in\mathbb N$) and }M\models\eta \;\;\Longrightarrow\;\; M\models\theta$$
which is equivalent to:
$$(M\models\chi_n\text{ for all $n\in\mathbb N$) and }M\not\models\theta \;\;\Longrightarrow\;\; M\not\models\eta$$
So, suppose $M\models\chi_n$ for all $n\in\mathbb N$ and $M\not\models\theta$. By propositions \ref{prop:conservative tc} and \ref{prop:tc} we have $M\models\chi_n\iff \#D\ge n$. Since $M\models\chi_n$ for all $n\in\mathbb N$, it follows that $D$ is infinite. Since $M\not\models\theta$, Lemma \ref{lemma:2} ensures that $M$ is full. So, $M$ is a full id-model with an infinite domain. By Lemma \ref{lemma:1}, it follows that $M\not\models\eta$, as required.

To prove item 2, consider any finite $X_0\subseteq X$. Fix an arbitrary $k\ge 1$ such that $X_0\cap \{\chi_k,\chi_{k+1},\dots\}=\emptyset$ (since $X_0$ is finite, such a $k$ exists). Let $D$ be a set with $k$ elements and let $M_D$ be the canonical full id-model based on $D$, as given by Definition \ref{def:canonical id-model}. We have:
\begin{itemize}
\item $M_D\models\chi_n$ for all $\chi_n\in X_0$, since for all $\chi_n\in X_0$ we have $n<k=\#D$;
\item $M_D\models\eta$ by Lemma \ref{lemma:1}, since $M_D$ is full and $D$ is finite;
\item $M_D\not\models\theta$ by Lemma \ref{lemma:2}, since $M_D$ is full.
\end{itemize}
So, $M_D$ provides a countermodel to the entailment $X_0,\eta\models\theta$.
\end{proof}

\noindent
Using the connection between id-entailment and general entailment in \inqbq\ (Proposition \ref{prop:id-entailment}), we can also extend our result to general entailment.

\begin{theorem}\label{thm:1} \inqbq\ over the signature $\Sigma=\{=_2,a_0,b_0\}$ is not entailment-compact. That is, there is a set of formulas $\Phi\cup\{\psi\}$ such that $\Phi\models\psi$, but $\Phi_0\not\models\psi$ for all finite $\Phi_0\subseteq\Phi$.
\end{theorem}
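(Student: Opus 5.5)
The plan is to transfer the counterexample from Theorem~\ref{theor:compactness} to general entailment by using Proposition~\ref{prop:id-entailment}. Let $\rho:=\forall x\forall y?(x=y)$, and let $X=\{\chi_n\mid n\in\mathbb N\}$, $\eta$ and $\theta$ be as in the proof of Theorem~\ref{theor:compactness}. I would take
$$\Phi:=X\cup\{\eta,\rho\}\qquad\text{and}\qquad\psi:=\theta.$$

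First, the entailment $\Phi\models\theta$. Item~1 in the proof of Theorem~\ref{theor:compactness} gives $X,\eta\models_\id\theta$. By the left-to-right direction of Proposition~\ref{prop:id-entailment}, this yields $X,\eta,\rho\models\theta$, which is exactly $\Phi\models\psi$.

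Second, the failure of every finite subset. Let $\Phi_0\subseteq\Phi$ be finite. Then $\Phi_0\subseteq X_0\cup\{\eta,\rho\}$ for some finite $X_0\subseteq X$. Since entailment is monotone in the premises, it suffices to show $X_0,\eta,\rho\not\models\theta$.
\begin{itemize}
\item Suppose instead that $X_0,\eta,\rho\models\theta$.
\item By the right-to-left direction of Proposition~\ref{prop:id-entailment}, we would get $X_0,\eta\models_\id\theta$.
\item This contradicts item~2 of the proof of Theorem~\ref{theor:compactness}.
\end{itemize}
For a more concrete check, the canonical full id-model $M_D$ with $\#D=k$ large enough is itself a general information model. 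Being an id-model, it supports $\rho$ on its whole universe. It also supports $X_0$ and $\eta$, but not $\theta$. So it is a direct countermodel.

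There is essentially no obstacle here. The only point needing care is that $\rho$ must be placed in $\Phi$ itself, not added on the side, so that finite subsets omitting $\rho$ are also covered. Monotonicity handles those subsets, because dropping premises only makes the entailment harder to obtain.
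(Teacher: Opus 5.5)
Your proposal is correct and takes essentially the paper's approach: both lift the id-level counterexample of Theorem~\ref{theor:compactness} to general entailment via Proposition~\ref{prop:id-entailment}, using $\rho=\forall x\forall y?(x=y)$ as an extra premise. The difference is only presentational. The paper argues by contradiction that compactness of $\models$ would make $\models_\id$ compact. You instead exhibit the witness $\Phi=X\cup\{\eta,\rho\}$, $\psi=\theta$ explicitly, and your monotonicity remark makes explicit the handling of finite subsets that omit $\rho$, which the paper's chain of equivalences leaves implicit.
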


\begin{proof} Suppose for a contradiction that entailment in \inqbq\ over the signature $\Sigma$ were compact. Then id-entailment would be compact as well, since using Proposition \ref{prop:id-entailment} and the supposed compactness of \inqbq-entailment we would have:
\begin{eqnarray*}
\Phi\models_\id\psi&\iff & \Phi,\forall x\forall y?(x=y)\models\psi\\
&\iff & \text{for some finite }\Phi_0\subseteq\Phi: \Phi_0,\forall x\forall y?(x=y)\models\psi\\
&\iff & \text{for some finite }\Phi_0\subseteq\Phi: \Phi_0\models_\id\psi
\end{eqnarray*}
But this would contradict the previous theorem.
\end{proof}
\noindent
We thus established an answer to Open Question 1: inquisitive first-order logic is not entailment-compact---at least, not over arbitrary signatures. 

This result precludes the possibility of a strongly complete proof system for \inqbq\ (if such a system existed, every entailment would be witnessed by a proof, which would use only a finite set of premises, implying entailment-compactness). We will now see that a weakly complete proof system for \inqbq\ is not possible, either.



%


\section{Recursive enumerability}
\label{sec:re}

In this section, we settle Open Question 2, proving that the set of \inqbq\ validities is \emph{not}, in general, recursively enumerable.

Towards this result, we generalize the notion of canonical full id-model given in Definition \ref{def:canonical id-model} in the following way.

\begin{definition}\label{def:cm generalized}
Let $\M=(D,I)$ be a standard predicate logic model in a signature $\Sigma$, and let $a,b$ be constant symbols which are not in $\Sigma$. The \emph{canonical full id-model based on $\M$} is the id-model $M_\M$ over the extended signature $\Sigma\cup\{=_2,a_0,b_0\}$ defined as follows: the universe is $W_D=D^2$; the domain is $D$; and the interpretation function is defined so that for each world $w=(d,e)$, we have:
$$
a_w=d
\qquad
\qquad
b_w=e
\qquad
\qquad
\sigma_w=I(\sigma)\text{ for all $\sigma\in\Sigma$}
$$
%
In words, the interpretation of the fresh constant symbols $a$ and $b$ changes from world to world, covering all possible pairs of objects, while the interpretation of the symbols in $\Sigma$ is constant and coincides with their interpretation in the original model $\M$.
\end{definition}

\noindent
Clearly, the model $M_\M$ is full. Moreover, for each world $w$, consider the local structure $\M_w=(D,I^+_w)$ induced by $M_\M$ on $w$: the reduct of this structure to the signature $\Sigma$ is simply the original model $\M=(D,I)$. Using this fact, we can show that for classical sentences $\aa$ in the original signature $\Sigma$, support in $M_\M$ coincides with truth in $\M$ under standard Tarskian semantics.

\begin{lemma}\label{lemma:support-truth} For any classical sentence $\aa$ in the signature $\Sigma$ we have:
$$M_\M\models\aa\iff \M\models\aa$$
\end{lemma}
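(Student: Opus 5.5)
The plan is to reduce support of $\aa$ in the whole model to truth at single worlds, and then truth at a world to truth in $\M$. Three steps do this.

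First, since $\aa$ is a classical formula, Proposition \ref{prop:tc} tells us it is truth-conditional. Hence
$$M_\M\models\aa\iff M_\M,W_D\models\aa\iff \forall w\in W_D:\ M_\M,w\models\aa.$$

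Second, by Proposition \ref{prop:conservative tc}, for each world $w$ we have $M_\M,w\models\aa\iff \M_w\models\aa$. Here $\M_w$ is the standard structure that $M_\M$ induces at $w$. Because $M_\M$ is an id-model, $\M_w$ is just $(D,I^+_w)$, a structure for the extended signature $\Sigma\cup\{=,a,b\}$.

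Third, I would use the observation made just before the statement: the reduct of $\M_w$ to $\Sigma$ is exactly $\M$. This holds because $\sigma_w=I(\sigma)$ for every $\sigma\in\Sigma$ and the domain is $D$. Since $\aa$ contains only symbols of $\Sigma$, the standard reduct lemma of Tarskian semantics gives $\M_w\models\aa\iff\M\models\aa$.

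The one point needing care is equality. If $=$ occurs in $\aa$, then $\M$ interprets it as true identity on $D$. In $M_\M$, $=$ is interpreted as $\id_D$ at every world, so the two interpretations agree. (If $=$ belongs to $\Sigma$ as an ordinary symbol, the definition already fixes $=_w$ as the identity, which matches.) With that settled, $\M_w\models\aa$ has the same truth value as $\M\models\aa$ at every world.

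Combining the three steps, $M_\M\models\aa$ iff $\M\models\aa$ holds at all $w\in W_D$, which is just $\M\models\aa$, because $W_D=D^2$ is non-empty. Non-emptiness of $W_D$ is needed for the final equivalence: an empty universe would make the "for all $w$" condition vacuous. I expect no real obstacle here; the only steps needing care are the equality/id-model check and this non-emptiness remark.
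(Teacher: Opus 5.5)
Your proposal is correct and follows the paper's proof step for step: truth-conditionality of $\aa$ (Proposition \ref{prop:tc}), then truth at a world via Proposition \ref{prop:conservative tc}, then the fact that the reduct of each $\M_w$ to $\Sigma$ is $\M$. Your extra remarks, that equality is interpreted as identity on both sides and that $W_D=D^2$ is non-empty, are correct details the paper leaves implicit.
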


\begin{proof} We have:
\begin{align*}
&M_\M\models\aa&&\iff&&M_\M,W_D\models\aa & \text{(definition)}\\
&&&\iff&&\forall w\in W_D: M_\M,w\models\aa& \text{(Prop.\ \ref{prop:tc})}\\
&&&\iff&&\forall w\in W_D: \M_w\models\aa& \text{(Prop.\ \ref{prop:conservative tc})}\\
&&&\iff&&\M\models\aa&\text{($\M_w\upharpoonright\Sigma=\M$)}
\end{align*}
\end{proof}

\noindent
Let us say that a first-order sentence is \emph{finitely valid} in first-order logic if it is valid over finite structures. 
We can now establish a connection between finite validity in first-order logic and id-validity as well as general validity in \inqbq.

\begin{lemma}\label{lemma:3} For any first-order sentence $\aa$ in a signature $\Sigma$ not containing the constants $a$ and $b$, the following are equivalent:
\begin{enumerate}
\item $\aa$ is finitely valid in first-order logic
\item $\eta\to\aa\lori\theta$ is id-valid in \inqbq \hfill where $\eta,\theta$ are defined as Section \ref{sec:compactness}
\item $\rho\land\eta\to\aa\lori\theta$ is valid in \inqbq \hfill where $\rho:=\forall x\forall y?(x=y)$
\end{enumerate}
\end{lemma}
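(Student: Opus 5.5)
I will prove (1)$\Leftrightarrow$(2) directly, and then obtain (2)$\Leftrightarrow$(3) from Proposition~\ref{prop:id-entailment}.

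For (2)$\Leftrightarrow$(3), note that $\models\rho\land\eta\to\aa\lori\theta$ holds iff $\rho\models\eta\to\aa\lori\theta$. Indeed, $\rho\land\eta\to\chi$ is equivalent to $\rho\to(\eta\to\chi)$, and an implication is valid iff its antecedent entails its consequent; the latter is immediate from the support clause for $\to$. By Proposition~\ref{prop:id-entailment}, $\rho\models\eta\to\aa\lori\theta$ holds iff $\models_\id\eta\to\aa\lori\theta$.

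For (2)$\Rightarrow$(1), I argue by contraposition. Suppose $\M$ is a finite structure with $\M\not\models\aa$, and take the canonical full id-model $M_\M$ of Definition~\ref{def:cm generalized}. Since $M_\M$ is full and its domain is finite, the proof of Lemma~\ref{lemma:1} applies verbatim: that argument only uses $a,b$, equality, and fullness, so the extra symbols of $\Sigma$ play no role. Hence $M_\M\models\eta$. By Lemma~\ref{lemma:support-truth}, $M_\M\not\models\aa$. By Lemma~\ref{lemma:2}, whose proof likewise only involves $a,b$, we get $M_\M\not\models\theta$. So the whole state $W_D$ supports $\eta$ but not $\aa\lori\theta$, and $M_\M$ is an id-countermodel to (2).

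For (1)$\Rightarrow$(2), assume $\aa$ is finitely valid. Take any id-model $M$, state $s$ and assignment $g$, and suppose $M,s\models\eta$; I must show $M,s\models\aa\lori\theta$. By Locality (Proposition~\ref{prop:locality}), I may replace $M$ by $M_{|s}$ and assume $s=W$. If $M\models\theta$, we are done. Otherwise $M$ is full by Lemma~\ref{lemma:2}, so Lemma~\ref{lemma:1} together with $M\models\eta$ gives that $D$ is finite. Then for every world $w$, the local structure $\M_w=(D,I_w)$ is finite, and its reduct to $\Sigma$ satisfies $\aa$ by finite validity. Proposition~\ref{prop:conservative tc} gives $M,w\models\aa$ for each $w\in W$, and truth-conditionality (Proposition~\ref{prop:tc}) yields $M,W\models\aa$.

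The only delicate point is the last direction. There the interpretation of $\Sigma$ may vary across worlds, so $M$ need not be of the form $M_\M$. This is harmless, because finiteness of $D$ makes each local structure finite, and classical support reduces to worldwise truth. Another point to check is that Lemmas~\ref{lemma:1} and~\ref{lemma:2}, stated for the signature $\{=,a,b\}$, extend to expanded signatures. Their proofs never mention other symbols, so they do.
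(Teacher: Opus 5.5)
Your proof is correct and follows essentially the same route as the paper. For (2)$\Rightarrow$(1) you use the canonical full id-model $M_\M$ with Lemmas~\ref{lemma:support-truth}, \ref{lemma:2} and~\ref{lemma:1}; for (1)$\Rightarrow$(2) you use the fullness/finiteness dichotomy plus worldwise truth of classical formulas, argued directly where the paper argues contrapositively; and for (2)$\Leftrightarrow$(3) you use Proposition~\ref{prop:id-entailment} with import-export, with your explicit appeals to Locality and to the signature-independence of Lemmas~\ref{lemma:1} and~\ref{lemma:2} simply spelling out steps the paper leaves implicit.
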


\begin{proof} $\,$
\begin{description}
\item[$1\Rightarrow 2$.] Suppose $\eta\to\aa\lori\theta$ is not id-valid in \inqbq. Then there is an id-model $M=(W,D,I)$ such that $M\models\eta$ but $M\not\models\aa$ and $M\not\models\theta$. Since $M\not\models\theta$, $M$ is full by Lemma \ref{lemma:2}. Since $M$ is full and $M\models\eta$, we have that $D$ is finite by Lemma \ref{lemma:1}. Since $M\not\models\aa$ and $\aa$ is classical, by Proposition \ref{prop:tc} there is a world $w$ such that $M,w\not\models\aa$. By Proposition \ref{prop:conservative tc}, this means that for the structure $\M_w=(D,I_w)$ we have $\M_w\not\models\aa$. So, $\M_w$ is a finite structure falsifying $\aa$, showing that $\aa$ is not finitely valid in first-order logic.

\item[$2\Rightarrow 1$.] Suppose $\aa$ is not finitely valid in first-order logic. Then there is a finite structure $\M$ with $\M\not\models\aa$. Now consider $M_\M$, the canonical full id-model based on $\M$, as given by Definition \ref{def:cm generalized}. By Lemma \ref{lemma:support-truth} we have $M_\M\not\models\aa$. By Lemma \ref{lemma:2}, we have $M_\M\not\models\theta$ since $M_\M$ is full, and by Lemma \ref{lemma:1} we have $M_\M\models\eta$ since $M_\M$ is full and finite. Thus, $M_\M\not\models\eta\to\aa\lori\theta$, showing that $\eta\to\aa\lori\theta$ is not id-valid in \inqbq.

\item[$2\Leftrightarrow 3$.] By Proposition \ref{prop:id-entailment} and the deduction theorem, for any formula $\phi$ we have $\models_\id\phi\iff  \rho\models\phi\iff {\models\rho\to\phi}$. The result then follows from this and the import-export equivalence $\rho\to(\eta\to\aa\lori\theta)\equiv\rho\land\eta\to\aa\lori\theta$.\qedhere
\end{description}
\end{proof}

\noindent
We can now use this connection to provide an answer to Open Question 2. 

\begin{theorem}\label{thm:2} 
The sets of validities and id-validities of \inqbq\ are not, in general, recursively enumerable. 
More precisely, if $\Sigma$ is a signature containing either a predicate symbol of arity $\ge 2$ (other than equality), or a function symbol of arity $\ge 1$, 
then the set of \inqbq-validities and id-validities in the extended signature $\Sigma\cup\{=_2,a_0,b_0\}$  is not recursively enumerable.
\end{theorem}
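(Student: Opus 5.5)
The plan is a many-one reduction from finite validity in first-order logic to (id-)validity in \inqbq, using Lemma \ref{lemma:3}. The map $\aa\mapsto\eta\to\aa\lori\theta$ (resp.\ $\aa\mapsto\rho\land\eta\to\aa\lori\theta$) is computable, since $\eta,\theta,\rho$ are fixed sentences. By Lemma \ref{lemma:3} it sends finitely valid sentences, and only those, to id-validities (resp.\ validities) in the signature $\Sigma\cup\{=_2,a_0,b_0\}$. So if either set of \inqbq-validities were recursively enumerable, the set of finitely valid first-order $\Sigma$-sentences would be recursively enumerable too. Note that the equality symbol may occur in $\aa$, which is harmless because Lemma \ref{lemma:3} allows any first-order sentence over a signature without $a,b$, and equality is in the extended signature anyway.

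It then remains to invoke Trakhtenbrot's theorem in the form that, for any signature $\Sigma$ containing a predicate symbol of arity $\ge 2$ or a function symbol of arity $\ge1$, the set of finitely valid sentences over $\Sigma$ (with equality) is not recursively enumerable. The usual argument runs as follows. Finite satisfiability is recursively enumerable, since finite structures can be enumerated and checked. If finite validity were also r.e., then finite satisfiability would be decidable, because $\aa$ is finitely satisfiable iff $\neg\aa$ is not finitely valid. That contradicts Trakhtenbrot's undecidability of finite satisfiability for such signatures.

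The one point needing care is the signature condition. The classical result is often stated for a single binary relation. A function symbol of arity $\ge1$ suffices as well: a unary function $f$ can be used to code a binary relation (e.g.\ via standard interpretations), or we can cite the known extension of Trakhtenbrot's theorem to signatures with a unary function symbol together with at least one further symbol. Here the constants $a,b$ are not available inside $\aa$, so I would rely on the classical classification of Trakhtenbrot-type results (as in Börger, Grädel and Gurevich). A predicate of arity $>2$ trivially simulates a binary one by repeating arguments.

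With this in hand, the theorem follows for both validity and id-validity. I expect this bookkeeping about which signatures make finite validity non-r.e. to be the only real obstacle.
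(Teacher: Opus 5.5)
Your reduction is the paper's own. Lemma~\ref{lemma:3} gives computable many-one reductions $\aa\mapsto(\eta\to\aa\lori\theta)$ and $\aa\mapsto(\rho\land\eta\to\aa\lori\theta)$ from finite validity over $\Sigma$ (with equality) to id-validity and validity over $\Sigma\cup\{=_2,a_0,b_0\}$, followed by an appeal to Trakhtenbrot. Your passage from undecidability of finite satisfiability to non-r.e.\ finite validity is correct. So is the simulation of a predicate of arity $>2$ by a binary one. The gap is exactly at the point you flagged.

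\textbf{The unary-function case fails.} A single unary function symbol cannot code a binary relation by any computable, finiteness-preserving reduction. Finite satisfiability for first-order sentences over one unary function symbol with equality is decidable. This remains true if unary predicates and constants are added (the L\"ob--Gurevich class). One way to see it: finite structures of one unary function have tree-width at most $2$, so even their monadic second-order theory is decidable. Hence finite validity over $\Sigma=\{f\}$ with $f$ unary is decidable, in particular r.e., and the reduction yields nothing there.

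Your fallback, ``a unary function together with at least one further symbol'', fails for the same reason when the extra symbols are unary predicates or constants. In any case the statement allows $\Sigma=\{f\}$, and, as you note, $a$ and $b$ cannot occur in $\aa$. The B\"orger--Gr\"adel--Gurevich classification you propose to rely on places this case on the decidable side.

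\textbf{What the argument does prove.} The paper's proof has the same weakness: it cites Trakhtenbrot for signatures containing ``a function symbol of arity $\ge 1$ along with equality.'' Both arguments establish the theorem when $\Sigma$ contains any one of the following:
\begin{itemize}
\item a predicate of arity $\ge 2$;
\item a function symbol of arity $\ge 2$ (code $R(x,y)$ as $f(x,y)=x$, handling one-element structures separately);
\item two unary function symbols.
\end{itemize}
For $\Sigma=\{f\}$ with $f$ unary, possibly with unary predicates and constants, this route gives nothing. That clause of the statement needs either a genuinely different argument, for instance one exploiting $a$, $b$ and the state structure, or a corrected hypothesis.
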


\begin{proof} The previous lemma shows that the problem of finite validity for first-order sentences in a signature $\Sigma$ reduces to the problem of id-validity in \inqbq\ over the extended signature $\Sigma\cup\{=_2,a_0,b_0\}$ through the computable mapping $\aa\mapsto{(\eta\to\aa\lori\theta)}$, and to the problem of general validity in this signature via the computable mapping $\aa\mapsto {(\rho\land\eta\to\aa\lori\theta)}$. 

Thus, if the set of first-order finite validities in a signature $\Sigma$ is not r.e., then the set of \inqbq\ validities and id-validities in the signature $\Sigma\cup\{=_2,a_0,b_0\}$ is not r.e., either. The claim then follows from the well-known fact that the set of first-order finite validities is not r.e.\ for signatures containing a predicate of arity $\ge 2$, or containing a function symbol of arity $\ge 1$ along with equality \citep{trakhtenbrot}. 
\end{proof}

\section{Extension to pure relational signatures}\label{sec:relsig}

The formulas $\eta$ and $\theta$ that we used in the previous sections to prove our results make use of two constants $a$ and $b$ in combination with the equality predicate. In this section we show that neither aspect of the signature is crucial: analogous results can be obtained for signatures containing only predicate symbols, and without equality.

\paragraph{Doing without constants.} 
To adapt our results to a relational signature (i.e., one containing only predicate symbols), it suffices to simulate the constants $a$ and $b$ by unary predicates $P$ and $Q$ that are stipulated to hold of exactly one object at each world.

Let us say that a model $M$ for a signature including the predicates $P$ and $Q$ is a \emph{unique-instance} model if for every world $w\in W$, $P_w$ and $Q_w$ are singletons. Note that $M$ is a unique-instance model just in case it supports the following classical first-order sentence: 
\begin{eqnarray*}
\iota&:=& \exists x\forall y(Py\leftrightarrow y=x)\land\exists x\forall y(Qy\leftrightarrow y=x)
\end{eqnarray*}
In a unique-instance model, a state $s\subseteq W$ can be associated, as above, with a binary relation on $D$, viz., $R_s=\{(a,b)\in D^2\mid \text{for some }w\in s: P_w=\{a\}\text{ and }Q_w=\{b\}\}$. Just as above, we then say that $M$ is \emph{full} in case $R_W=D^2$.

Relative to unique-instance models, the property of non-fullness is defined by the following adaptation of the formula $\theta$ above:
\begin{eqnarray*}
\theta'&:=&\existsi x\existsi y\neg(P(x)\land Q(y))
\end{eqnarray*}
Finally, we can adapt the definition of $\eta$ in the following way:
\begin{eqnarray*}
\eta'&:=&(\,{\dep{P}{Q}}\,\land\, {\dep{Q}{P}}\,\land\, {\existsi x\neg Q(x)}\;\to\; x\neg P(x)\,)
\end{eqnarray*}
where $\dep{P}{Q}:=(\forall x?P(x)\to\forall x?Q(x))$ and $\dep{Q}{P}:= (\forall x?Q(x)\to\forall x?P(x))$.
A straightforward adaptation of the proof of Lemma \ref{lemma:1} establishes that for every full unique-instance model $M=(W,D,I)$ we have
$M\models\eta'\iff D\text{ is finite}$.


%

With this result at hand, we can reproduce our main results in the relational signature $\Sigma=\{=_2,P_1,Q_1\}$.

\begin{corollary} \label{cor:compactness-relational} \inqbq\ over the signature $\Sigma=\{=_2,P_1,Q_1\}$ is not entailment-compact. 
\end{corollary}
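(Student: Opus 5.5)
The plan is to replicate the proof of Theorem \ref{theor:compactness} and then lift it to general entailment as in Theorem \ref{thm:1}. Two new ingredients are needed. First, an analogue of Lemma \ref{lemma:2}: for every unique-instance id-model $M$, $M$ is full iff $M\not\models\theta'$. This is immediate from the semantics: $M\models\theta'$ iff there are $d,e\in D$ such that for every $w\in W$ it is not the case that $d\in P_w$ and $e\in Q_w$. Since $P_w$ and $Q_w$ are singletons, this holds iff $(d,e)\notin R_W$. Second, the fact stated just before the corollary, that $M\models\eta'\iff D$ is finite for full unique-instance models. I would verify it briefly as in Lemma \ref{lemma:1}. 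On a unique-instance state $s$, $\forall x?P(x)$ is supported iff the $P$-witness is constant on $s$, so $\dep{P}{Q}$ says $R_s$ is functional and $\dep{Q}{P}$ says it is injective. The formula $\existsi x\neg Q(x)$ says $\mathrm{ran}(R_s)\neq D$, and failure of the consequent $\existsi x\neg P(x)$ says $\mathrm{dom}(R_s)=D$.

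The counterexample to compactness is then
$$X,\ \iota,\ \eta'\models_\id\theta' \qquad\text{but}\qquad X_0,\ \iota,\ \eta'\not\models_\id\theta' \text{ for finite } X_0\subseteq X,$$
with $X=\{\chi_n\mid n\in\mathbb N\}$ as before. The premise $\iota$ is classical, so by Propositions \ref{prop:conservative tc} and \ref{prop:tc} it is supported in $M$ exactly when $M$ is unique-instance. For the positive entailment, let $M$ be an id-model supporting all premises with $M\not\models\theta'$. Then $M$ is unique-instance, has an infinite domain, and is full by the analogue of Lemma \ref{lemma:2}, so $M\not\models\eta'$, a contradiction. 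For the failure on a finite $X_0$, I take a canonical model: $W=D^2$ with $D$ of size $k$ large enough, and $P_{(d,e)}=\{d\}$, $Q_{(d,e)}=\{e\}$. This model is a full unique-instance id-model with finite domain, so it supports $\iota$, $\eta'$ and $X_0$ but not $\theta'$.

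The step from id-entailment to general entailment is exactly the argument of Theorem \ref{thm:1}, via Proposition \ref{prop:id-entailment}: if general entailment were compact, id-entailment would be too, since the extra premise $\forall x\forall y?(x=y)$ can always be retained in the finite subset.

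The only mildly delicate point is the dependence-atom analysis for $\eta'$. One must check that $\forall x?P(x)$ on a subset of a unique-instance state really pins down the unique witness. This is clear, since the extension $P_w$ is constant on $s$ and is a singleton. One must also check that states that are not unique-instance cause no issue, and they do not, because every world of $M$ is unique-instance.
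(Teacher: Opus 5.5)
Your proposal is correct and follows the paper's proof. The paper uses the same counterexample: $X,\iota,\eta'\models_\id\theta'$ holds, while $X_0,\iota,\eta'\not\models_\id\theta'$ for every finite $X_0\subseteq X$, established through the unique-instance analogues of Lemmas \ref{lemma:1} and \ref{lemma:2} and the canonical full model, and then lifted to general entailment via Proposition \ref{prop:id-entailment}. Your explicit check of the clauses for $\dep{P}{Q}$, $\dep{Q}{P}$ and $\theta'$ spells out the ``straightforward adaptation'' that the paper leaves to the reader.
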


\begin{proof} Define $X$ as in the proof of Theorem \ref{theor:compactness}. A straightforward adaptation of that proof  shows that $X,\iota,\eta'\models_\id\theta'$, but  $X_0,\iota,\eta'\not\models_\id\theta'$ for all finite $X_0\subseteq X$.
This shows that id-entailment in the signature $\Sigma$ is not compact. The extension from id-entailment to general entailment then follows via Proposition \ref{prop:id-entailment}.
\end{proof}

\begin{corollary}\label{cor:axiomatizability-relational} 
Let $\Sigma$ be a signature containing either a predicate of arity $\ge 2$ (other than equality), or a function symbol of arity $\ge 1$. 
Let $\Sigma^+=\Sigma\cup\{=_2,P_1,Q_1\}$ be the signature obtained by adding to $\Sigma$ two fresh unary predicates and, if not already present in $\Sigma$, the equality predicate. Then the set of validities and id-validities of \inqbq\ in the signature $\Sigma^+$ is not recursively enumerable.
\end{corollary}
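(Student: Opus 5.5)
The plan is to mirror the proof of Theorem \ref{thm:2}, replacing the constants $a,b$ by the unary predicates $P,Q$ and relativizing everything to unique-instance models via the classical sentence $\iota$. Concretely, I will show that for every first-order sentence $\aa$ in $\Sigma$ (not containing $P,Q$) the following are equivalent:
\begin{enumerate}
\item $\aa$ is finitely valid;
\item $\iota\land\eta'\to\aa\lori\theta'$ is id-valid;
\item $\rho\land\iota\land\eta'\to\aa\lori\theta'$ is valid, where $\rho:=\forall x\forall y?(x=y)$.
\end{enumerate}
Since $\aa\mapsto(\iota\land\eta'\to\aa\lori\theta')$ and $\aa\mapsto(\rho\land\iota\land\eta'\to\aa\lori\theta')$ are computable, Trakhtenbrot's theorem will then give the claim exactly as in Theorem \ref{thm:2}. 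Note that $=$ is present in $\Sigma^+$, which covers the case of function symbols.

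First I will record the two ingredients for unique-instance id-models. The first is the analogue of Lemma \ref{lemma:2}: $M\models\theta'$ iff there are $d,e$ such that for every $w$ we have $d\notin P_w$ or $e\notin Q_w$. Under $\iota$ this says that $(d,e)\notin R_W$, so $M\models\theta'$ iff $M$ is not full. The second is the adapted Lemma \ref{lemma:1} stated in the text: for full unique-instance id-models, $M\models\eta'$ iff $D$ is finite. I will check that $\forall x?P(x)$ is supported in $s$ iff $P_w$ is constant on $s$, so that $\dep{P}{Q}$ and $\dep{Q}{P}$ express that $R_s$ is functional and injective. Likewise $\existsi x\neg Q(x)$ holds in $s$ iff some $e$ is outside $\text{ran}(R_s)$, and $\existsi x\neg P(x)$ fails iff $\text{dom}(R_s)=D$. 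I read the consequent of $\eta'$ as $\existsi x\neg P(x)$.

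For $2\Rightarrow1$, given a finite $\M\not\models\aa$, I will build the analogue of Definition \ref{def:cm generalized}. Take worlds $D^2$, set $P_{(d,e)}=\{d\}$ and $Q_{(d,e)}=\{e\}$, and interpret $\Sigma$ rigidly as in $\M$. This model is a full unique-instance id-model, so it supports $\iota$ and $\eta'$ and fails $\theta'$. It also fails $\aa$ by the argument of Lemma \ref{lemma:support-truth}, since each local structure's $\Sigma$-reduct is $\M$. Since $\iota$ is classical and true everywhere, the whole universe supports the antecedent but not the consequent.

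For $1\Rightarrow2$, let $M$ be an id-model and $s$ a state supporting $\iota\land\eta'$ but not $\aa\lori\theta'$. By Locality (Proposition \ref{prop:locality}) I will pass to $M_{|s}$. This is a unique-instance model, since $\iota$ is truth-conditional. It is full because it fails $\theta'$, and it supports $\eta'$, so its domain is finite. Some world then falsifies $\aa$, giving a finite countermodel as in Lemma \ref{lemma:3}.

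The step $2\Leftrightarrow3$ is Proposition \ref{prop:id-entailment} plus import-export. The main care point is the adapted Lemma \ref{lemma:1}: I must make sure that fullness lets every relation $R\subseteq D^2$ be realized as some $R_s$. I will do this by choosing, for each pair in $R$, a world witnessing it, so the infinite-domain direction goes through unchanged.
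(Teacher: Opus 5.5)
Your proposal is correct and follows the paper's proof: the paper also reproduces Lemma~\ref{lemma:3} with $\iota\land\eta'\to\aa\lori\theta'$ and $\rho\land\iota\land\eta'\to\aa\lori\theta'$, and then concludes via Trakhtenbrot as in Theorem~\ref{thm:2}. Your explicit checks fill in details the paper leaves as a ``straightforward adaptation'': the unique-instance analogues of Lemmas~\ref{lemma:1} and~\ref{lemma:2}, the passage to $M_{|s}$ via Locality, and reading the consequent of $\eta'$ as $\existsi x\neg P(x)$.
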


\begin{proof} We reproduce the proof of Lemma \ref{lemma:3}, showing that for every classical formula $\aa$ in the signature $\Sigma$, the following are equivalent:
\begin{itemize}
\item $\aa$ is finitely valid;
\item $\iota\land\eta'\to\aa\lori\theta'$ is id-valid in \inqbq;
\item $\rho\land\iota\land\eta'\to\aa\lori\theta'$ is valid in \inqbq. \hfill where $\rho:=\forall x\forall y?(x=y)$
\end{itemize}
Note that the \inqbq-formulas in the second and third line are in the signature $\Sigma^+$.
\end{proof}

\paragraph{Doing without equality.} To avoid relying on the equality predicate `$=$', note that 
in \inqbq, this predicate is interpreted merely as a congruence. The logical behavior of this predicate can thus be simulated by an arbitrary binary predicate $E$ along with first-order axioms that force the interpretation of this predicate to be a congruence.

Given a binary predicate $E$, let $\ee_E$ be the classical first-order sentence expressing that $E$ is an equivalence relation. Moreover, for any symbol $\sigma$ in the signature, let $\kappa_E^\sigma$ be the first-order sentence expressing that $E$ is a congruence with respect to $\sigma$. Finally, let $K_E^\Sigma=\{\ee_E\}\cup\{\kappa_E^\sigma\mid\sigma\in\Sigma\}$. Note that any model $M$ supporting $K_E^\Sigma$ is one where at each world $w$, the interpretation $E_w$ is a congruence for all symbols in the signature, and can thus in principle serve as an interpretation for the equality predicate. Using this fact, it is straightforward to prove the following lemma.

%

\begin{lemma}\label{lemma:congruence} Let $\Phi\cup\{\psi\}$ be a set of formulas in a signature containing the equality predicate `$=$'. Let $\Phi_E$ and $\psi_E$ be obtained from $\Phi$ and $\psi$ by replacing each occurrence of the equality predicate by $E$. Then we have:
$$\Phi\models\psi\iff \Phi_E,K_E^\Sigma\models\psi_E.$$
\end{lemma}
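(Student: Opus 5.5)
The plan is to give a model-translation argument in both directions, the key fact being that the semantics of \inqbq\ only ever uses `$=$' as a world-relative congruence, never as real identity. Fix the signature $\Sigma$ containing `$=$'. For the new signature, write $\Sigma_E$ for the result of replacing `$=$' by a fresh binary predicate $E$, with no equality symbol. The core observation is a bijection between first-order information models. A model $M=(W,D,I)$ for $\Sigma$ corresponds to a model $M_E=(W,D,I_E)$ for $\Sigma_E$, where $E_w:=\,=_w$ and all other symbols are interpreted as in $I$. Conversely, a model $N$ for $\Sigma_E$ in which each $E_w$ is a congruence for all symbols of $\Sigma$ determines a legitimate model $N^=$ for $\Sigma$ by setting $=_w:=E_w$.

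The first step is a translation lemma. I will show by induction on formulas $\phi$ that for every state $s$ and assignment $g$,
$$M,s\models_g\phi\iff M_E,s\models_g\phi_E,$$
and symmetrically for $N$ and $N^=$. The atomic case is immediate: the support clause for $t=t'$ reads $\forall w\in s:([t]_w^g,[t']_w^g)\in{=_w}$, which is literally the clause for $E(t,t')$ once $E_w={=_w}$. Terms are evaluated identically in both models. Every other clause ($\bot,\land,\lori,\to,\forall,\existsi$) quantifies over the same substates and the same domain $D$, so the inductive steps go through trivially. I also need that $M_E$ supports $K_E^\Sigma$. Each sentence in $K_E^\Sigma$ is classical, so by Propositions \ref{prop:tc} and \ref{prop:conservative tc} support amounts to truth at every world. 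Truth at a world holds because $=_w$ is by definition an equivalence relation and a congruence for every symbol.

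With this in hand, both directions of the equivalence follow. For ($\Leftarrow$), suppose $\Phi\not\models\psi$, witnessed by $M,s,g$. Then $M_E,s,g$ supports $\Phi_E\cup K_E^\Sigma$ but not $\psi_E$. For ($\Rightarrow$), suppose $N,s,g$ supports $\Phi_E\cup K_E^\Sigma$ but not $\psi_E$. Since $K_E^\Sigma$ is classical, each $E_w$ for $w\in s$ is an equivalence relation and a congruence. The same need not hold for worlds outside $s$, so by Locality (Proposition \ref{prop:locality}) I first pass to $N_{|s}$. There $N_{|s}^=$ is a well-defined model, and by the translation lemma it supports $\Phi$ but not $\psi$ at $s$, contradicting $\Phi\models\psi$.

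The main obstacle is this last point. The axioms $K_E^\Sigma$ only constrain the worlds in the evaluation state, not the whole model, so restricting to $s$ via Proposition \ref{prop:locality} is essential. A small side issue arises if $s=\emptyset$: then everything is supported, so $\psi_E$ is supported as well and no counterexample can occur there. Hence $s$ is nonempty and $N_{|s}$ has a nonempty universe.
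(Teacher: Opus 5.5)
Your proof is correct and follows the same route as the paper. The paper only calls this lemma ``straightforward'', on the grounds that in a model supporting $K_E^\Sigma$ each $E_w$ is a congruence and can serve as the interpretation of `$=$' (and conversely each $=_w$ is a legitimate interpretation of $E$). Your restriction to $N_{|s}$ via Locality, together with the observation $s\neq\emptyset$, supplies the detail that sketch leaves out: $K_E^\Sigma$ only constrains the worlds in the evaluation state, so this step is needed to make the argument work for entailment over arbitrary states.
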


\noindent
Using this connection, we can now easily prove versions of our results for signatures without equality.

\begin{corollary} Let $\Sigma=\{E_2,P_1,Q_1\}$ be a signature containing a binary predicate $E$ and two unary predicates $P,Q$. $\inqbq$ is not entailment-compact over this signature.
\end{corollary}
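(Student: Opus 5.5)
The plan is to transfer the failure of compactness from the signature $\{=_2,P_1,Q_1\}$ to $\{E_2,P_1,Q_1\}$ via Lemma \ref{lemma:congruence}. Corollary \ref{cor:compactness-relational} supplies a set $\Phi=X\cup\{\iota,\eta',\rho\}$ and a formula $\theta'$ in the signature $\{=_2,P_1,Q_1\}$, where $\rho:=\forall x\forall y?(x=y)$, with the following two properties. First, $\Phi\models\theta'$. Second, $\Phi_0\not\models\theta'$ for every finite $\Phi_0\subseteq\Phi$; it suffices to check this for those finite $\Phi_0$ that contain $\iota,\eta',\rho$. That this is exactly what the corollary yields follows by unfolding its proof through Proposition \ref{prop:id-entailment}: the relation $X,\iota,\eta'\models_\id\theta'$ is the same as $X,\iota,\eta',\rho\models\theta'$, and likewise $X_0,\iota,\eta'\not\models_\id\theta'$ is the same as $X_0,\iota,\eta',\rho\not\models\theta'$.

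Next, I replace every occurrence of $=$ by $E$. Let $\Psi=\Phi_E\cup K_E^\Sigma$, where $\Sigma=\{=_2,P_1,Q_1\}$ in the sense of Lemma \ref{lemma:congruence}. Note that $K_E^\Sigma$ is a finite set, consisting of the equivalence axiom and the congruence axioms for $P$ and $Q$ (and trivially for $E$). By Lemma \ref{lemma:congruence}, $\Phi\models\theta'$ gives $\Psi\models\theta'_E$.

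For the finite subsets, take any finite $\Psi_0\subseteq\Psi$. Enlarging it only makes the entailment easier, so I may assume $\Psi_0=(\Phi_0)_E\cup K_E^\Sigma$ for some finite $\Phi_0\subseteq\Phi$. If $\Psi_0\models\theta'_E$, then Lemma \ref{lemma:congruence}, applied to the finite set $\Phi_0$, gives $\Phi_0\models\theta'$. That contradicts the corollary, so $\Psi_0\not\models\theta'_E$. Hence entailment over $\{E_2,P_1,Q_1\}$ is not compact.

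The only delicate point is making sure Lemma \ref{lemma:congruence} is applied to general entailment, not id-entailment. This is why $\rho$ has to be put explicitly into the premise set: in general entailment $E$ is only a congruence, so rigidity must be expressed as a premise rather than built into the model class. I would also check that the translation $(\cdot)_E$ commutes with taking subsets, which is immediate because it acts formula by formula.
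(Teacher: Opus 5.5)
Your proof is correct and follows essentially the paper's route: both transfer the non-compactness over $\{=_2,P_1,Q_1\}$ (Corollary \ref{cor:compactness-relational}) to $\{E_2,P_1,Q_1\}$ via Lemma \ref{lemma:congruence}. The difference is only presentational. The paper argues by contraposition (compactness over the $E$-signature would give compactness over the $=$-signature), whereas you exhibit the explicit witness $\Phi_E, K_E^\Sigma\models\theta'_E$, with $\rho$ placed among the premises so that the counterexample is stated for general entailment rather than id-entailment.
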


\begin{proof} 
If \inqbq\ were entailment-compact over the signature $\Sigma=\{E_2,P_1,Q_1\}$, it would be also over the signature $\Sigma'=\{=_2,P_1,Q_1\}$, in contrast to Corollary~\ref{cor:compactness-relational}. Indeed, given arbitrary $\Phi$ and $\psi$ in the signature $\Sigma'$, we would have:
\begin{align*}
&\Phi\models\psi&&\iff&&\Phi_E,K_E^\Sigma\models\psi_E & \text{(Lemma \ref{lemma:congruence})}\\
&&&\iff&&\text{for some finite }\Phi_0\subseteq\Phi:(\Phi_0)_E,K_E^\Sigma\models\psi_E& \text{(compactness over $\Sigma$)}\\
&&&\iff&&\text{for some finite }\Phi_0\subseteq\Phi:\Phi_0\models\psi& \text{(Lemma \ref{lemma:congruence})}
\end{align*}
%
\end{proof}

\begin{corollary} 
Let $\Sigma$ be a finite signature containing either a predicate of arity $\ge 2$ (other than equality), or a function symbol of arity $\ge 1$. 
Let $\Sigma^+=\Sigma\cup\{E_2,P_1,Q_1\}$ be the signature obtained by adding to $\Sigma$ a fresh binary predicate and two fresh unary predicates. Then the set of \inqbq-validities in the signature $\Sigma^+$ is not recursively enumerable.
\end{corollary}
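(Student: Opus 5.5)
We reduce to Corollary~\ref{cor:axiomatizability-relational}, using the simulation of equality from Lemma~\ref{lemma:congruence}, in the same way the previous corollary reduced to Corollary~\ref{cor:compactness-relational}. Let $\Sigma'=\Sigma\cup\{=_2,P_1,Q_1\}$; if $\Sigma$ already contains equality, first rename it to a fresh binary predicate, or equivalently treat $E$ as replacing it. By Corollary~\ref{cor:axiomatizability-relational}, the set of \inqbq-validities over $\Sigma'$ is not r.e. We will show that this set many-one reduces, via a computable map, to the set of validities over $\Sigma^+=\Sigma\cup\{E_2,P_1,Q_1\}$. If the latter set were r.e., the former would be too, which is a contradiction.

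\textbf{Defining the map.} For a formula $\psi$ over $\Sigma'$, let $\psi_E$ be obtained by replacing every occurrence of $=$ with $E$. Since $\Sigma$ is finite, $K_E^{\Sigma^+}$ is a finite set of classical sentences. Let $\kappa$ be their conjunction, and map $\psi$ to $\kappa\to\psi_E$. The map is computable, because $\kappa$ is a fixed sentence and the substitution is syntactic. Finiteness of $\Sigma$ is exactly what makes $\kappa$ a single sentence.

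\textbf{Correctness.} Applying Lemma~\ref{lemma:congruence} with $\Phi=\emptyset$ gives $\models\psi\iff K_E\models\psi_E$. By the semantics of $\to$ and persistence (the deduction theorem, as used in the proof of Lemma~\ref{lemma:3}), $K_E\models\psi_E$ holds iff $\kappa\models\psi_E$, which holds iff $\models\kappa\to\psi_E$.

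\textbf{Main obstacle.} The delicate point is bookkeeping on the signature. Lemma~\ref{lemma:congruence} requires the congruence axioms for all symbols of the signature, including $P$ and $Q$. The translated formulas in Corollary~\ref{cor:axiomatizability-relational} involve $\rho=\forall x\forall y?(x=y)$, which becomes $\forall x\forall y?E(x,y)$; the lemma handles this uniformly. If Trakhtenbrot's hypothesis relies on equality in the function-symbol case, that equality is supplied by $=$ in $\Sigma'$, so Corollary~\ref{cor:axiomatizability-relational} still applies.
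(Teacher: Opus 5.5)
Your proposal is correct and follows essentially the same route as the paper: you reduce validity over $\Sigma\cup\{=_2,P_1,Q_1\}$, which is not r.e.\ by Corollary~\ref{cor:axiomatizability-relational}, to validity over $\Sigma^+$ via the computable map $\psi\mapsto(\kappa\to\psi_E)$. This is justified by Lemma~\ref{lemma:congruence} and the deduction theorem, with finiteness of $\Sigma$ making $\kappa$ a single sentence. You are also right that $\kappa$ must include the congruence axioms for $P$ and $Q$: these are what the left-to-right direction of Lemma~\ref{lemma:congruence} uses, so they are how the paper's $\kappa_E^\Sigma$ should be read.
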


\begin{proof} Since $\Sigma$ is a finite signature, $K_E^\Sigma$ is a finite set, so we can let $\kappa_E^\Sigma:=\bigwedge K_E^\Sigma$. 
By Lemma \ref{lemma:congruence}, for every formula $\phi$ in the signature $\Sigma\cup\{=_2,P_1,Q_1\}$ we have:
$$\models\phi\quad\iff\quad \kappa_E^\Sigma\models\phi_E\quad\iff\quad \models\kappa_E^\Sigma\to\phi_E$$
So, the problem of \inqbq-validity in the signature $\Sigma\cup\{=_2,P_1,Q_1\}$ reduces to the problem of \inqbq-validity in the signature $\Sigma\cup\{E_2,P_1,Q_1\}$ through the computable map $\phi\mapsto (\kappa_E^\Sigma\to\phi_E)$. Since the set of \inqbq-validities in the former signature is not r.e.\ by Corollary \ref{cor:axiomatizability-relational}, neither is the set of \inqbq-validities in the latter signature.
\end{proof}


\section{Failure of Craig interpolation}\label{sec:craig}


In addition to compactness and the recursive enumerability of validities, a signature property of classical first-order logic is the Craig interpolation theorem. This theorem states that, given sentences $\aa$ and $\beta$ in the signatures $\Sigma_\aa$ and $\Sigma_\bb$ respectively, if $\aa$ entails $\bb$ then there is an \emph{interpolant}, i.e., a formula $\gg$ in the signature $\Sigma_\aa\cap\Sigma_\bb$ such that $\aa$ entails $\gg$ and $\gg$ entails $\bb$. Our results allow us to show that this property, too, fails for \inqbq, if no language restrictions are imposed. Indeed, as the following theorem shows, interpolation fails in a rather dramatic way.


\begin{theorem} There exist formulas $\aa$ and $\phi$ of \inqbq\ in the signatures $\Sigma_\aa$ and $\Sigma_\phi$ respectively, such that:
\begin{enumerate}
\item $\aa$ entails $\phi$: $\aa\models\phi$;
\item the entailment is non-trivial: $\aa\not\models\bot$ and $\top\not\models\phi$;
\item $\aa$ and $\phi$ are in disjoint signatures: $\Sigma_\aa\cap\Sigma_\phi=\emptyset$. 
\end{enumerate}
Together, (1)-(3) imply a failure of Craig interpolation: by (3), an interpolant for $\aa\models\phi$ would have to be in the empty signature, and so equivalent to $\top$ or $\bot$;\footnote{Note that we take equality to be a predicate in the signature, so the fact that $\Sigma_\aa\cap\Sigma_\phi=\emptyset$ means that the only atomic formula in the common signature is $\bot$.} but by (2), neither $\top$ nor $\bot$ is an interpolant.
\end{theorem}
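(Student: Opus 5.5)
We take $\aa$ to be a classical sentence that forces the domain to be infinite without using equality. We take $\phi$ to be the sentence from Lemma \ref{lemma:3} that fails exactly on full models with finite domain. Concretely, let $\Sigma_\aa=\{R_2\}$ for a fresh binary predicate $R$, and let
$$\aa\;:=\;\forall x\neg R(x,x)\;\land\;\forall x\forall y\forall z(R(x,y)\land R(y,z)\to R(x,z))\;\land\;\forall x\exists y R(x,y),$$
which says that $R$ is an irreflexive, transitive and serial relation. Let $\Sigma_\phi=\{=_2,a_0,b_0\}$ and
$$\phi\;:=\;\rho\land\eta\to\theta,\qquad \rho:=\forall x\forall y?(x=y),$$
with $\eta,\theta$ as in Section \ref{sec:compactness}. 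Condition (3) is then immediate, since $\aa$ contains no equality.

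For (1), we use the deduction theorem together with Proposition \ref{prop:id-entailment}. These give $\aa\models\rho\land\eta\to\theta$ iff $\aa,\rho,\eta\models\theta$ iff $\aa,\eta\models_\id\theta$. So take an id-model $M$ and a state $s$ supporting $\aa$ and $\eta$. By locality (Proposition \ref{prop:locality}) we may assume $s=W$, and the case $s=\emptyset$ is trivial.

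Since $\aa$ is classical, Propositions \ref{prop:tc} and \ref{prop:conservative tc} show that each local structure $\M_w=(D,I_w)$ satisfies $\aa$. An irreflexive, transitive, serial relation has no finite model, so $D$ is infinite. Now suppose $M\not\models\theta$. Then $M$ is full by Lemma \ref{lemma:2}, and Lemma \ref{lemma:1} yields $M\not\models\eta$, a contradiction. Hence $M\models\theta$.

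The one point needing care is that $\aa$ does not mention equality. We therefore cannot reason about quotient structures directly in the general (non-id) setting. The passage through id-models via $\rho$ and Proposition \ref{prop:id-entailment} handles this cleanly, because $\rho$ is part of $\phi$'s antecedent and uses only symbols in $\Sigma_\phi$.

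For (2), $\aa\not\models\bot$ is witnessed by any one-world model whose local structure is $(\mathbb N,<)$. For $\top\not\models\phi$, we take the canonical full id-model $M_D$ of Definition \ref{def:canonical id-model} with $D$ finite and nonempty. It supports $\rho$ because it is an id-model, and it supports $\eta$ by Lemma \ref{lemma:1}. It does not support $\theta$ by Lemma \ref{lemma:2}, so $M_D\not\models\phi$.

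Finally, the footnote's remark settles the interpolation consequence. Any sentence over the empty signature is built from $\bot$ alone and is thus equivalent to $\top$ or $\bot$. By (2), neither of these can serve as an interpolant.
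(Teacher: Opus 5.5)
Your proof is correct and follows the paper's argument essentially step for step. It uses the same $\aa$ and $\phi=\rho\land\eta\to\theta$, reduces (1) to $\aa,\eta\models_\id\theta$ via the deduction theorem and Proposition~\ref{prop:id-entailment}, applies locality with Lemmas~\ref{lemma:2} and~\ref{lemma:1} against the infinitude of $D$, and uses a finite canonical full id-model for $\top\not\models\phi$. One small correction to your commentary: you route through id-models because Lemmas~\ref{lemma:1} and~\ref{lemma:2} are stated for id-models and rely on rigid identity, not because $\aa$ lacks equality; even in a non-id model, a local quotient structure satisfying $\aa$ would already force $D$ to be infinite.
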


\begin{proof} Let $\aa$ be a sentence of classical first-order logic containing only a binary predicate $R$ (and no equality), such that $\aa$ is satisfiable but only admits infinite models.\footnote{We can, e.g., take the following sentence, which says that $R$ is irreflexive, transitive, and serial, and thus requires the existence of an infinite $R$-path:
$$\aa \;:= \;\forall x\neg R(x,x) \;\land\; \forall x\forall y\forall z(R(x,y) \land R(y,z) \to R(x,z)) \;\land\; \forall x\exists y R(x,y) $$}
Let $\phi\;:=\;\eta\land\rho\to\theta$, where $\eta$ and $\theta$ are defined as in Section~\ref{sec:compactness} and $\rho:=\forall x\forall y?(x=y)$. We claim that these formulas satisfy the conditions (1)-(3) above.
\begin{enumerate}
\item $\aa\models\phi$. By the deduction theorem, this claim is equivalent to $\aa,\eta,\rho\models\theta$, which in turn by Proposition \ref{prop:id-entailment} is equivalent to $\aa,\eta\models_\id\theta$. Towards a contradiction, suppose this is not the case: then there is an id-model $M=(W,D,I)$ and a state $s\subseteq W$ with $M,s\models\aa$ and $M,s\models\eta$ but $M,s\not\models\theta$.

Since $M,s\not\models\theta$ we know $s\neq\emptyset$. Fix a world $w\in s$.  By persistency, from $M,s\models\aa$ we get $M,w\models\aa$, and since $\aa$ is a classical formula, by Proposition~\ref{prop:conservative tc} we have $\M_w\models\aa$ in classical Tarskian semantics, where $\M_w=(D,I_w)$.\footnote{Note that the domain of the local structure $\M_w$ is simply $D$ since $M$ is an id-model.} By our choice of $\aa$, this means that $D$ is infinite. 

On the other hand, we have $M,s\models\eta$ and $M,s\not\models\theta$. By Proposition \ref{prop:locality}, these claims can be rewritten as global support claims ${M_{|s}}\models\eta$ and $M_{|s}\not\models\theta$ about the model $M_{|s}=(s,D,I_{|s})$ obtained by restricting $M$ to $s$. By Lemma \ref{lemma:2}, $M_{|s}\not\models\theta$ implies that $M_{|s}$ is full. By Lemma \ref{lemma:1}, $M_{|s}\models\eta$ together with the fullness of $M_{|s}$ implies that $D$ is finite, contradicting our earlier conclusion.

\item $\aa\not\models\bot$ since $\aa$ is satisfiable in Tarskian semantics, and so also satisfiable by some non-empty state of some model $M$ by Proposition \ref{prop:conservative tc}. 

$\top\not\models\phi$ since an arbitrary id-model $M$ that is full and based on a finite domain supports $\eta$ by Lemma \ref{lemma:1} but not $\theta$ by Lemma \ref{lemma:2}, thus falsifying $\phi$.
\item $\Sigma_\aa\cap\Sigma_\phi=\emptyset$ since $\Sigma_\aa=\{R\}$ and $\Sigma_\phi=\{=,a,b\}$.
\qedhere
\end{enumerate}
\end{proof}


\section{Transferring the results to inquisitive team logic}
\label{sec:inqbt}


Inquisitive first-order logic has a ``twin'' system, \emph{inquisitive team logic} ($\inqBT$), which is based on the same language, but interpreted in a different semantic setting: while the semantics of \inqbq\ is based on possible world semantics, representing multiple states of affairs, the one of \inqbt\ relies on team semantics (see Footnote \ref{fn:1}), representing multiple assignments of values to variables. More precisely, instead of information states---i.e., sets of worlds---the semantics of \inqbt\ uses \emph{teams}---i.e., sets of assignments. The semantic clauses are structurally parallel to those of \inqbq, but with teams now playing the role of information states. Just like in \inqbq\ we can formalize questions about the state of affairs, in \inqbt\ we can formalize questions about the values of variables; in particular, the question ``what is the value of $x$'' can be expressed by the formula $\lambda x\,:=\,\existsi y(y=x)$ (or, equivalently, by $\mu x\,:=\,\forall y?(y=x)$). Functional dependencies between the values of variables can be expressed as implications between questions: borrowing the notation of Dependence Logic \citep{Vaananen:07}, we can take $\dep{x_1,\dots,x_n}{y}$ to abbreviate the formula $\lambda x_1\land\dots\land\lambda x_n\to \lambda y$.\footnote{It is worth remarking that the system \inqbt\ was first considered by \cite{Yang:14} under the name \textsf{WID} (for \emph{weak intuitionistic dependence logic}). \cite{Yang:14} also observed that, with respect to sentences, \inqbt\ is essentially equivalent to classical first-order logic, modulo notational differences. Crucially, this does not extend to open formulas.}

The theoretical landscape about \inqbq\ and \inqbt\ is very similar. In particular, the two long-standing open problems that we have addressed for \inqbq\ correspond to analogous open problems for \inqbt: it is not known whether the logic \inqbt\ is entailment-compact, nor whether the set of its validities is recursively enumerable. 

Indeed, in recent work, \cite{CiardelliConti:26} have established entailment-preserving translations in both directions between \inqbq\ and \inqbt, and used this to show the equivalence of the key problems for the two systems: \inqbq\ is entailment-compact iff \inqbt\ is, and \inqbq-validities are r.e.\ iff \inqbt-validities are. Through this connection, our results for \inqbq\ transfer immediately to \inqbt. 
However, in this section we describe more explicitly how our proof strategy in this paper can be directly adapted to \inqbt, since this is of some independent interest (it allows us, e.g., to obtain the desired results for simpler signatures). We begin by providing the necessary background about \inqbt\ \citep[for a more detailed exposition, see \S7\ of][]{Ciardelli:23book}, and then sketch how to adapt the proofs of our main results.

\paragraph{Background on \inqbt.} The syntax of \inqbt\ is exactly the same as for \inqbq; for simplicity, throughout this section we assume that equality is in the signature.
%
%
%
%
%
%
%
%
%
%
The semantics of $\inqBT$ is given in terms of a relation of support, which is defined relative to a standard first-order structure $\M=(D,I)$ and a team $X$, which is 
 a set of assignments defined over a common domain $\text{dom}(X)$ of variables. 
For a team $X$, $x$ a variable, and $d\in D$ an object, we denote by $X[x\mapsto d]$ the team over the domain $\text{dom}(X)\cup\{x\}$ obtained by assigning to $x$ the constant value $d$ through the team:
$$X[x\mapsto d]=\{g[x\mapsto d]\mid g\in X\}.$$

\begin{definition} The relation $\M\models_X\phi$ between a model $\M$, a team $X$, and a formula $\phi$ whose free variables are included in $\text{dom}(X)$ is defined inductively as follows:
\begin{itemize}
    \item $\M\models_X p\iff \forall g\in X:\M\models_g p$ in Tarskian semantics\hfill if $p$ is an atom
    \item $\M\models_X\bot\iff X=\emptyset$
    \item $\M\models_X\phi\land\psi\iff\M\models_X\phi$ and $\M\models_X\psi$
    \item $\M\models_X\phi\lori\psi\iff\M\models_X\phi$ or $\M\models_X\psi$
    \item $\M\models_X\phi\to\psi\iff\forall Y\subseteq X:\M\models_Y\phi$ implies $\M\models_Y\psi$
    \item $\M\models_X\forall x\phi\iff\forall d\in D:\M\models_{X[x\mapsto d]}\phi$
    \item $\M\models_X\existsi x\phi\iff\exists d\in D:\M\models_{X[x\mapsto d]}\phi$
\end{itemize}
\end{definition}
\noindent
Just like for \inqbq, support for \inqbt\ is persistent (if a team $X$ supports a formula, so does every subteam $Y\subseteq X$) and the empty team supports any formula whatsoever. Moreover, support is \emph{local} in the sense that it depends only on the values that the team assigns to the variables that occur free in the relevant formula. More precisely, for any model $\M$, any formula $\phi$ of $\inqBT$, and any teams $X,Y$ whose domain includes the set $\text{FV}(\phi)$ of free variables in $\phi$, we have
$$X|_{\text{FV}(\phi)}=Y|_{\text{FV}(\phi)}\text{ implies }\M\models_X\phi\iff\M\models_Y\phi,$$
where $X|_{\text{FV}(\phi)}$ denotes the team obtained by restricting each assignment in $X$ to $\text{FV}(\phi)$, and likewise for $Y$.
%
By the locality property, we can stipulate that a model $\M$ satisfies a sentence $\phi$ (written $\M\models\phi$) in case $\M\models_X\phi$ holds relative to an arbitrary nonempty team $X$; if $\M\not\models\phi$ we say that $\M$ falsifies $\phi$. 

The notions of entailment, validity, and equivalence are defined in the obvious~way, e.g.,  a set of formulas $\Phi$ entails a formula $\psi$ ($\Phi\models\psi$), if in every model, any team that is defined on all the free variables in $\Phi\cup\{\psi\}$ and supports all $\phi\in\Phi$ also supports $\psi$.

The connection with classical first-order logic can be  made through a  semantic property of \emph{flatness} which is the analogue of truth-conditionality of $\inqbq$-formulas.
    A formula $\phi$ is flat if for all models $\M$ and teams $X$ with $\text{dom}(X)\supseteq\text{FV}(\phi)$ we have
    $$\M\models_X\phi\iff\forall g\in X:\M\models_{\{g\}}\phi.$$

\noindent One can then show that all classical formulas are flat, and moreover, $\M\models_{\{g\}}\phi$ holds in \inqbt\ just in case $\M\models_{g}\phi$ holds in standard Tarskian semantics. Thus, for a classical formula, to be supported by a team is just to be true, classically, relative to each assignment in the team. In particular, for a classical sentence, our definition of satisfaction in a model coincides with the one given by Tarskian semantics.



\paragraph{Extending the results to \inqbt.}
We will show how to redefine the crucial formulas $\eta$ and $\theta$, using which the results for \inqbt\ can be established by straightforward adaptations of the arguments we gave for \inqbq. Let:
$$\eta(x,y)\;:=\;( \dep{x}{y}\, \wedge \dep{y}{x}\land \existsi z (z\neq x)\; \rightarrow\;  \existsi z  (z\neq y)).$$
where, as discussed above in this section, $\dep{x}{y}$ is defined as a shorthand for the implication $\lambda x\to\lambda y$, and similarly for $\dep{y}{x}$. Note that with this definition, $\eta$ is a formula with two free variables $x,y$ in the signature $\Sigma=\{=_2\}$ containing only the equality predicate. Reasoning as in the proof of Lemma \ref{lemma:1} we have that
$\M\models_X\phi\iff D \textrm{ is finite}$,
where $X$ is the \emph{full} team with domain $\{x,y\}$ over $D$, i.e., the team containing all assignments from $\{x,y\}$ into $D$ \citep[see also Theorem 3.1 in][]{KontinenCiardelli:26}. Furthermore, consider the formula $\theta$ defined as
$$\theta\;:=\;\existsi u\existsi v\neg(x=u\,\land\, y=v),$$
Reasoning as in the proof of Lemma \ref{lemma:2}, we can show that for any team $X$ over $\{x,y\}$, $\M\models_X\phi$ holds iff $X$ is not full, i.e., if there is some assignment from $\{x,y\}$ into $D$ that is not in $X$.

With these modified formulas at hand, we can closely mimic the proofs of Theorem \ref{thm:1} and Theorem \ref{thm:2} to obtain the following results.

\begin{theorem} \inqBT\ over the signature $\Sigma:=\{=\}$ is not entailment-compact.
\end{theorem}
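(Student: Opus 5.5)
We mimic the proof of Theorem \ref{theor:compactness}, with the full team over $\{x,y\}$ playing the role of the full information model. Take $\Phi=X\cup\{\eta(x,y)\}$ and $\psi=\theta$, where $X=\{\chi_n\mid n\in\mathbb N\}$ is the set of classical sentences ``there are at least $n$ elements'' in the signature $\{=\}$. We then claim two things: $X,\eta(x,y)\models\theta$, while $X_0,\eta(x,y)\not\models\theta$ for every finite $X_0\subseteq X$. Since everything is already in the signature $\{=\}$ and \inqbt\ has no intensional variation of equality, no analogue of Proposition \ref{prop:id-entailment} is needed.

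First I would record the two analogues of Lemmas \ref{lemma:1} and \ref{lemma:2} stated just above, in a form usable for arbitrary teams. For a team $Y$ with $\{x,y\}\subseteq\text{dom}(Y)$, set $R_Y=\{(g(x),g(y))\mid g\in Y\}$, and call $Y$ full if $R_Y=D^2$. By locality, support of $\eta(x,y)$ and $\theta$ depends only on $Y|_{\{x,y\}}$, so there is no loss in passing to that restriction. The $\theta$ lemma gives $\M\models_Y\theta$ iff $Y$ is not full. For the $\eta$ lemma, a subteam $Z\subseteq Y$ supports $\dep{x}{y}\land\dep{y}{x}$ iff $R_Z$ is an injective partial function. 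It supports $\existsi z(z\neq x)$ iff $\text{dom}(R_Z)\neq D$, and fails $\existsi z(z\neq y)$ iff $\text{ran}(R_Z)=D$. Here the roles of $x,y$ are swapped relative to Section \ref{sec:compactness}, so a falsifying $Z$ corresponds to an injection from $D$ onto a proper subset of itself, read in the converse direction. Since every relation $R\subseteq R_Y$ is $R_Z$ for some $Z\subseteq Y$, a full $Y$ supports $\eta(x,y)$ iff $D$ is finite.

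Entailment goes as follows. Suppose $\M\models_Y\chi_n$ for all $n$ and $\M\models_Y\eta(x,y)$ with $Y$ nonempty; the empty team supports $\theta$ trivially. By flatness, $D$ is infinite. If $Y$ were full, the $\eta$ lemma would give a contradiction, so $Y$ is not full and hence supports $\theta$. For non-entailment, given finite $X_0$, pick $k$ exceeding all indices in $X_0$, let $\M$ have a $k$-element domain, and let $Y$ be the full team $D^{\{x,y\}}$. Then $Y$ supports each $\chi_n\in X_0$ and supports $\eta(x,y)$, but does not support $\theta$.

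The only delicate step is the $\eta$ lemma. There I must check carefully that the team-semantic clauses for $\lambda x=\existsi u(u=x)$ and for $\existsi z(z\neq x)$ yield exactly the stated conditions on $R_Z$, including the treatment of $\neq$ as the negation $\to\bot$, which is flat. I must also get the direction of the function correct. Once those clauses are verified, the rest is routine.
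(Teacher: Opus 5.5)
Your proposal is correct and follows essentially the paper's route. The paper likewise uses $X\cup\{\eta(x,y)\}\models\theta$, with the full team over $\{x,y\}$ playing the role of the full id-model, and mimics the proof of Theorem \ref{theor:compactness}. Your team versions of Lemmas \ref{lemma:1} and \ref{lemma:2} are accurate, including the swapped roles of $x$ and $y$ in $\eta(x,y)$ and your observation that no analogue of Proposition \ref{prop:id-entailment} is needed.
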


\begin{theorem} 
If $\Sigma$ is a signature containing, in addition to equality, either a predicate symbol of arity $\ge 2$ or a function symbol of arity $\ge 1$, 
then the set of \inqBT-validities in the signature  $\Sigma$  is not recursively enumerable.
\end{theorem}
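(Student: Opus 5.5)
The plan is to mimic the proof of Lemma \ref{lemma:3} and Theorem \ref{thm:2} in the team setting, now using the open formulas $\eta(x,y)$ and $\theta(x,y)$ in place of $\eta$ and $\theta$. In this setting the variables $x,y$ play the role of the constants $a,b$, so no extra symbols are needed and the signature can stay $\Sigma$. The reduction will be from first-order finite validity over $\Sigma$, which is not r.e.\ by Trakhtenbrot's theorem. The target is validity of the \inqBT-formula $\eta(x,y)\to\aa\lori\theta(x,y)$, where $\aa$ is a classical $\Sigma$-sentence. No rigidity premise $\rho$ is needed, because in team semantics equality is interpreted classically in the single structure $\M$.

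The key claim is: $\aa$ is finitely valid iff $\eta(x,y)\to\aa\lori\theta(x,y)$ is \inqBT-valid.

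For the right-to-left direction, suppose $\M$ is finite with $\M\not\models\aa$, and take the full team $X$ on $\{x,y\}$. Then:
\begin{itemize}
\item $\M\models_X\eta$ by the finiteness characterization stated above;
\item $\M\not\models_X\theta$ because $X$ is full;
\item $\M\not\models_X\aa$ because $\aa$ is a classical sentence, hence flat, and false in $\M$ (and $X\neq\emptyset$).
\end{itemize}
So $X$ falsifies the implication.

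For the left-to-right direction, suppose some team $Z$, with $\text{dom}(Z)\supseteq\{x,y\}$ in a model $\M$, fails to support the implication. Then some $Y\subseteq Z$ supports $\eta$ but neither $\aa$ nor $\theta$. By locality, I restrict $Y$ to $\{x,y\}$. Since $\M\not\models_Y\theta$, the team $Y$ is full. Since $\M\models_Y\eta$ and $Y$ is full, $D$ is finite. Since $\aa$ is flat and not supported, $Y\neq\emptyset$ and $\M\not\models\aa$. Hence $\M$ is a finite countermodel to $\aa$.

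The map $\aa\mapsto(\eta\to\aa\lori\theta)$ is computable, so non-recursive-enumerability transfers from finite validity to \inqBT-validity.

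The main obstacle is checking carefully that the finiteness characterization for full teams holds, i.e.\ adapting Lemma \ref{lemma:1}. For a subteam $Y$ of the full team, I would associate the relation $R_Y=\{(g(x),g(y))\mid g\in Y\}$. Then $\dep{x}{y}$ and $\dep{y}{x}$ say that $R_Y$ is an injective function. One subtlety is the existential clauses, since $\existsi z$ extends each assignment by a constant value:
\begin{itemize}
\item $\existsi z(z\neq y)$ holds iff some $d$ lies outside $\text{ran}(R_Y)$;
\item $\existsi z(z\neq x)$ fails iff $\text{dom}(R_Y)=D$.
\end{itemize}
This matches the original analysis, and fullness of the team ensures every relation arises as some $R_Y$. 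A second subtlety is making sure "$\M\models\aa$" for sentences is compatible with teams over $\{x,y\}$; this follows from flatness and locality. I would also double-check that Trakhtenbrot applies with equality present, so that a function symbol of arity $\ge1$ suffices, as in Theorem \ref{thm:2}.
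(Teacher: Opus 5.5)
Your reduction is the one the paper intends: it says only to mimic Theorems \ref{thm:1} and \ref{thm:2} using the open formulas $\eta(x,y)$ and $\theta(x,y)$. Your individual verifications are correct:
the analysis of $R_Y$ for subteams of the full team; the restriction to $\{x,y\}$ by locality; the use of flatness of $\aa$ to get $Y\neq\emptyset$ and a finite countermodel; and the remark that no rigidity premise $\rho$ is needed. It is also essential, as you do, to reduce to validity of an \emph{open} formula, because on sentences \inqbt\ collapses to classical logic.

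The point you left to double-check does not go through in full generality. Trakhtenbrot's theorem fails when the only symbols of $\Sigma$ besides equality are a single unary function symbol, possibly with constants and unary predicates. For such signatures, finite satisfiability of first-order sentences, and hence finite validity, is decidable. This holds even though the finite model property fails there (``$f$ is injective but not surjective'' has only infinite models). One way to see it: graphs of unary functions have tree-width at most $2$, and the monadic second-order theory of finite structures of bounded tree-width is decidable (Courcelle). In that case your map $\aa\mapsto(\eta(x,y)\to\aa\lori\theta(x,y))$ reduces a decidable set and proves nothing.

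The argument is fine when $\Sigma$ contains a predicate of arity $\ge 2$, a function symbol of arity $\ge 2$, or at least two unary function symbols. With equality present, each of these suffices for Trakhtenbrot. The paper's proof appeals to Trakhtenbrot in exactly the same way, as does Theorem \ref{thm:2}. So this gap comes from the stated result's justification rather than from anything you introduced; covering the lone-unary-function case would need a different idea.
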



\section{Conclusion and open problems}
\label{sec:conclusion}

Since inquisitive first-order logic, \inqbq, was first introduced in \cite{Ciardelli:09thesis}, two fundamental meta-theoretic questions about this logic have remained open: whether entailment is compact, and whether validities are recursively enumerable. We have settled both questions in the negative: \inqbq-entailment is \emph{not} compact, and \inqbq-validities are \emph{not} recursively enumerable. Furthermore, we have proved that another key property of classical first-order logic, namely, Craig interpolation, fails in \inqbq. The situation can be summarized by saying that, in terms of its basic meta-theoretic properties, \inqbq\ patters with second-order rather than first-order logic.

It is worth pausing for a moment to reflect on \emph{why} this should be the case. \inqbq\ is intended primarily to provide a logic of \emph{first-order} questions---questions obtained by quantifying over ordinary individuals, such as ``which objects are $P$'' and ``what is one object that is $P$''. These questions do not seem to involve any recourse to second-order quantification. Why, then, should \inqbq\ have second-order-logic-like features? The reason is that \inqbq\ does not only extend first-order logic with formulas regimenting ordinary first-order questions: it also allows us to combine such formulas freely by logical operators, notably by means of implication. As we mentioned above, an implication between questions expresses the existence of a \emph{dependency}---essentially, a \emph{function} mapping answers to the antecedent question to corresponding answers to the consequent question. Such a function is inherently a second-order object. It is not questions \emph{per se}, then, but implications between questions, that introduce a second-order component into the semantics. This diagnosis leads us to expect that, by keeping the question-forming operations of \inqbq\ unchanged, but disallowing the possibility of connecting questions by implications, we would obtain a first-order-like fragment. And indeed, this is so: as mentioned in the introduction, the \emph{clant} (classical antecedent) fragment of \inqbq, obtained by disallowing questions in the antecedent of an implication, is compact and recursively axiomatizable, as shown by \cite{Grilletti:19} \citep[see also \S5.7 and \S6.3\ in][]{Ciardelli:23book}. It is worth noting that this fragment includes those formulas of \inqbq\ that correspond to natural classes of first-order questions, such as $?\forall xP(x)$ (``whether every object is $P$''), $\forall x?P(x)$ (``which objects are $P$''), $\existsi xP(x)$ (``what is one object that is $P$''), and many more.\footnote{On the other hand, note that the \emph{rex} fragment of \inqbq, obtained by restricting the occurrences of $\existsi$ to conditional antecedents, is also known to be first-order-like \citep{CiardelliGrilletti:22}, while being closed under arbitrary implications. This shows that the possibility of expressing dependencies among first-order questions does not \emph{inevitably} lead to a genuinely second-order semantics: it depends crucially on the particular classes of first-order questions that a certain logic can express.}


While settling some long-standing questions, our results also raise some new ones. 

First, it would be interesting to investigate whether there are any (non-trivial) signature types to which our results do not apply. In particular, what about the \emph{monadic} fragment of \inqbq, where the signature contains only unary predicates? Our proof strategy does not seem viable without a binary predicate in the signature, which leaves open the possibility that the monadic fragment of \inqbq\ is entailment-compact and recursively axiomatizable.

Second, in light of our results, work on ``tame'' fragments of \inqbq---fragments that \emph{are} entailment-compact and whose validities are recursively enumerable---becomes especially pressing. As mentioned in the introduction, at present two main tame fragments are known: the clant (classical antecedent) fragment \citep{Grilletti:19}, which restricts antecedents of implications to classical formulas, and the rex (restricted existential) fragment \citep{CiardelliGrilletti:22}, which restricts occurrences of the inquisitive existential quantifier to conditional antecedents.\footnote{Note that, in particular, the rex fragment includes so-called \emph{weak} inquisitive first-order logic, \textsf{InqWQ}, where $\lori$ is the only inquisitive operator \citep[see][]{Conti:26,Ciardelli:25}.} As we should expect, one of the formulas that played a key role in our proofs, namely the formula $\eta$, does not belong to either of these fragments: it is an implication with an inquisitive antecedent (thus lying outside clant), and with an inquisitive existential quantifier in the consequent (thus lying outside rex). One notable thing about $\eta$ is that it involves left-nested implications: in the definition, these are hidden within the dependence formulas $\dep{a}{b}$ and $\dep{b}{a}$, which in our setting are shorthands for the inquisitive implications $\existsi x(x=a)\to\existsi x(x= b)$ and $\existsi x(x=b)\to\existsi x(x= a)$.\footnote{Equivalently, as discussed at the very end of Section \ref{sec:background}, they could be taken as shorthand for the inuisitive implications $\forall x?(x=a)\to\forall x?(x= b)$ and $\forall x?(x=b)\to\forall x?(x= a)$.} A natural question, then, is whether disallowing left-nesting of implications suffices to obtain a tame fragment. Assuming that negation is taken as primitive (rather than defined as $\neg\phi:=(\phi\to\bot)$), this fragment is the counterpart of the so-called NNIL (``No Nestings of Implications on the Left'') fragment of intuitionistic first-order logic, which has been studied since the work of \cite{Visser:85} \citep[see also][]{Benthem:94,Ilin:21}. It is not hard to show that, up to equivalence, the NNIL fragment of \inqbq\ is a proper extension of the clant fragment; thus, a tameness result for this fragment would subsume and extend the known result for clant. Finally, a question in this area is whether it is possible to find a tame fragment of \inqbq\ that includes both the clant fragment and the rex fragment.


A different route to obtaining a tame version of inquisitive first-order logic proceeds, not by restricting the language, but by generalizing the semantics. It is well-known that second-order logic becomes tame relative to so-called Henkin semantics, where second-order quantifiers do not range over the full powerset of the domain, but only over a designated family of subsets closed under certain operations. One might similarly define a Henkin semantics for \inqbq, in which only a designated family of sets of worlds are available as information states in the semantics. As in the case of second-order logic, certain closure conditions on the available sets would have to be imposed to secure the validity of certain natural principles of \inqbq. To the extent that these closure conditions can be stated in (two-sorted) first-order logic, the logic resulting from this generalized semantics---call it $\inqbq_{\textsf H}$---would be compact and recursively enumerable. This logic would, at a minimum, be interesting as a tame approximation of \inqbq\ from below: every validity of $\inqbq_{\textsf H}$ would also be a validity of \inqbq, though the opposite would not hold in general.
\footnote{Thanks to 
Tommaso Moraschini (p.c.) for suggesting this line of work.}

In a different direction, it would be interesting to explore whether our results generalize to non-classical versions of inquisitive first-order logic. For now, the only such logic that has been studied is the intuitionistic version of inquisitive first-order logic \citep{PuncocharNoguera:26}. For this logic, the problems of compactness and recursive enumerability are currently open. It seems plausible that the ideas in this paper can be adapted to settle these problems in the negative, perhaps through an entailment-preserving translations from \inqbq\ to its intuitionistic counterpart.

Finally, an important problem about \inqbq\ posed in \cite{Ciardelli:23book} is left open by our results: is every invalidity of \inqbq\ refutable in a model that has a countable universe and a countable domain? In other words, is the logic of countable models identical to the logic of arbitrary models? It would be interesting to explore whether ideas similar to those we used in our proofs can help with this problem and other related problems concerning potential L\"owenheim-Skolem-like properties of \inqbq.


\bibliographystyle{natbib}
\bibliography{inquisitive}

\end{document}